\theoremstyle{plain}
        \newtheorem{theorem}{Theorem}[section]
        \newtheorem{lemma}[theorem]{Lemma}
        \newtheorem{proposition}[theorem]{Proposition}
        \newtheorem{corollary}[theorem]{Corollary}
        \theoremstyle{definition}
        \newtheorem{definition}[theorem]{Definition}
\newcommand{\C}{{\mathbb C}}
\newcommand{\R}{{\mathbb R}}
\newcommand{\g}{\mathfrak{g}}
\newcommand{\sfG}{\mathsf{G}}
\newcommand{\rk}{\operatorname{rk}}
\newcommand{\calD}{\mathcal{D}}
\newcommand {\calA}{\mathcal{A}}
\newcommand{\calF}{\mathcal{F}}
\newcommand{\calU}{\mathcal{U}}
\newcommand{\calC}{\mathcal{C}}
\newcommand{\doublegroupoid}[4]{\xymatrix{#1  \ar@<2pt>[d] \ar@<-2pt>[d] & #2 \ar@<2pt>[l] 
\ar@<-2pt>[l] \ar@<2pt>[d] \ar@<-2pt>[d] \\ #3  & #4 \ar@<2pt>[l] \ar@<-2pt>[l]}}
\title{The index of geometric operators on Lie groupoids}
\author{M.J.~Pflaum, H. Posthuma,~\textrm{and} X.~Tang}
\address{\newline
Markus J. Pflaum, {\tt markus.pflaum@colorado.edu}\newline
         \indent {\rm Department of Mathematics, University of Colorado,
         Boulder, USA}
         \newline
        Hessel Posthuma, {\tt H.B.Posthuma@uva.nl}\newline
         \indent {\rm Korteweg-de Vries Institute for Mathematics,
        University of Amsterdam,
         The Netherlands} 
         \newline
        Xiang Tang, {\tt xtang@math.wustl.edu}   \newline
         \indent {\rm  Department of Mathematics, Washington University,
         St.~Louis, USA}}
\begin{document}
\maketitle
\begin{abstract}
We revisit the cohomological index theorem for elliptic elements in the universal enveloping algebra of a 
Lie groupoid previously proved by the authors. We prove a Thom isomorphism for Lie algebroids which 
enables us to rewrite the ``topological side'' of the index theorem. This results in index formulae for Lie 
groupoid analogues of the familiar geometric operators on manifolds such as the signature and Dirac 
operator expressed in terms of the usual characteristic classes in Lie algebroid cohomology. 
\end{abstract}
\section*{Introduction}
The Atiyah--Singer index theorem for elliptic operators on compact manifolds lays a fundamental bridge 
between analysis and topology: it gives a topological formula involving characteristic classes computing 
the index of an operator, defined analytically as the difference in dimension between the kernel and the 
cokernel. Well known specific examples of the index theorem include the Gauss--Bonnet--Chern
theorem and Hirzebruch's signature theorem. These are obtained by considering the geometrically 
most natural elliptic differential operators on a compact manifold equipped with a metric: the Euler 
operator and the signature operator. Assuming that the underlying manifold is {\em spin} the Dirac 
operator is another example of an elliptic differential operator completely determined by the underlying 
geometry. 

Ever since the first proof was announced by Atiyah and Singer, the index theorem has been generalized 
in a multitude of directions. We mention the following: 1) the index theorem for families was part of the 
original series of papers by Atiyah--Singer \cite{as}. 2) in \cite{cs}, the family index theorem was generalized by 
Connes--Skandalis to longitudinal operators on compact foliated manifolds on the level of $K$-theory. In 
\cite{connes}, Connes deduced from this the cohomological version of the index theorem for 
foliations. 

The index theorem of \cite{ppt}, recently proved by the authors, takes this chain of generalizations one
step further: it gives a cohomological formula for the index, analytically defined by means of noncommutative 
geometry, of invariant elliptic operators along the fibers of a Lie groupoid over a compact base manifold. The
original Atiyah--Singer index theorem is recovered by considering the so-called {\em pair groupoid}, 
whereas the family index theorem follows using the natural groupoid associated to a submersion of 
manifolds. Finally, the longitudinal index theorem for foliations is obtained by means of the holonomy groupoid of a 
foliation, as it was already used in the original derivation in \cite{cs}.

The step generalizing from foliations to arbitrary Lie groupoids, means that we can now consider certain 
{\em singular foliations}, namely those induced by the image of the anchor of an integrable Lie 
algebroid. One of the main points of this article is to argue that this class of singular foliations
 is in fact quite well-behaved: using the geometry of Lie algebroids, one can define the singular 
 version of the geometric differential operators mentioned above, and apply the index theorem to
 these operators. 
 
 One of the mysterious looking features of the index theorem of \cite{ppt} is that the topological side involves integration over the dual of the Lie algebroid of cohomology classes of the pull-back of the Lie algebroid. This is analogous to the cohomological form of the Atiyah--Singer index theorem as
 an integral over the cotangent bundle of the original manifold. In this case the Thom isomorphism 
 provides the correspondence to integral formulae over the manifold itself.
Therefore, to get useful formulae in our case, we prove a version of the Thom isomorphism for 
Lie algebroids. Applied to the geometric operators mentioned above, the Euler, signature and Dirac 
Operator, we obtain index formulae that are completely similar to the usual cohomological index formulae, except for the fact that the characteristic classes take values in Lie algebroid cohomology
instead of de Rham cohomology.

This paper is organized as follows. In \S \ref{index} we briefly review the set-up and the statement of the
index theorem of \cite{ppt}. After that, in \S \ref{Thom}, we prove a Lie algebroid version of the Thom isomorphism. Then we discuss the construction of the natural geometric operators on Lie groupoids in
\S \ref{geom}, and use the Thom isomorphism to calculate the index formulae for these operators.
 \vskip 2mm
 
\noindent{\bf Acknowledgments}:  Pflaum is partially supported by NSF grant DMS 1105670, and 
Tang is partially supported by NSF grant DMS 0900985 and NSA grant H96230-13-1-0209. 

\section{The index theorem for Lie groupoids}
\label{index}
\subsection{Preliminaries}
The setting of the index theorem of \cite{ppt} is that of Lie groupoids. Here we shall describe the 
most basic definitions that we need. We refer to \cite{mackenzie, mm} for more details on the general 
theory of Lie groupoids.
Throughout this paper, $\sfG\rightrightarrows M$ denotes a Lie groupoid over a base manifold $M$, 
which is assumed to be compact. We write source and target maps as $s,t:\sfG\to M$, respectively, and denote the unit map 
by $u :M \rightarrow \sfG$. An arrow $g\in\sfG$ with $s(g)=x$ and $t(g)=y$ is written as $x\stackrel{g}{\rightarrow} y$, so 
that the multiplication $g_1g_2$ is defined when $t(g_1)=s(g_2)$. In general, we write $\sfG_k$ for the 
space of $k$-tuples of composable arrows
\[
  \sfG_k:=\{(g_1,\ldots,g_k)\in\sfG^{\times k},~s(g_i)=t(g_{i+1}),~i=1,\ldots,k-1\}.
\]
By definition of a Lie groupoid, $s$ and $t$ are assumed to be smooth submersions, so that $\sfG_k$ 
has a smooth manifold structure. With this, the multiplication is a smooth map $\sfG_2\to\sfG.$

The {\em Lie algebroid} of the groupoid $\sfG\rightrightarrows M$ is defined as the vector bundle
$\ker(dt)|_M$ over $M$, and is denoted by $\g$. Its space of smooth sections $\Gamma^\infty(M;\g)$ 
carries a Lie algebra structure $[~,~]$, together with an anchor map $\rho:\g\to TM$ satisfying
\begin{align*}
\rho([X,Y])&=[\rho(X),\rho(Y)],\\
[X,fY]&=f[X,Y]+\rho(X)(f)\cdot Y, 
\end{align*}
for $X,Y\in \Gamma^\infty (M;\g)$, and $f \in \calC^\infty (M)$. 

A {\em representation} of $\sfG\rightrightarrows M$ is a vector bundle $E\to M$ equipped with a linear action of $\sfG$. 
This means that to each $g\in \sfG$ is assigned a $\lambda_g\in{\rm Hom}(E_{s(g)},E_{t(g)})$ satisfying the properties
\begin{align*}
\lambda_{u(x)}&={\rm id}_{E_{u(x)}},\quad\mbox{for all}~x\in M,\\
\lambda_{g_1}\lambda_{g_2}&=\lambda_{g_1g_2},\quad\mbox{for all}~(g_1,g_2)\in \sfG_2.
\end{align*}
When a vector bundle $E\to M$ carries a representation of $\sfG\rightrightarrows M$, we can set up 
a complex computing the {\em differentiable (or smooth) cohomology} as follows: Define $C^k_{\rm diff}(\sfG;E):=\Gamma(\sfG_k;\tau_k^*E)$, 
where $\tau_k:\sfG_k\to M$ is defined by $\tau(g_1,\ldots,g_k)=t(g_1)$. The differential 
$d:C^k_{\rm diff}(\sfG;E)\to C^{k+1}_{\rm diff}(\sfG;E)$ is defined for $k\geq 1$ as
\[
\begin{split}
d\varphi(g_1,&\ldots,g_{k+1}):=\lambda_{g_1}\varphi(g_2,\ldots,g_{k+1})\\
&+\sum_{i=1}^k(-1)^k\varphi(g_1,\ldots,g_ig_{i+1},\ldots,g_k)+(-1)^{k+1}\varphi(g_1,\ldots,g_k),
\end{split}
\]
and in degree $k=0$ we simply have
\[
d\varphi(g):=\lambda_g\varphi(s(g))-\varphi(t(g)).
\]
Its cohomology is denoted by $H^\bullet_{\rm diff}(\sfG;E)$. From the formula above it is clear that 
$H^0_{\rm diff}(\sfG;E)$ consists of smooth invariant sections of $E$.

All these concepts and definitions above are rather straightforward generalizations of the theory of Lie 
groups. A striking difference however is that Lie groupoids in general do not have an adjoint 
representation, nor does the tangent bundle $TM$ support a representation of $\sfG$. It therefore 
comes as a surprise that, as proved in \cite{elw}, a canonical representation of $\sfG$ is defined on the line bundle
\begin{equation}
\label{density-bundle}
L_\g=\bigwedge^{\rm top}\g\otimes\bigwedge^{\rm top}T^*M.
\end{equation}
Sections of this line bundle are called ``transversal densities'' by analogy with the theory of foliations: when $\calF\subset TM$ is an integrable sub bundle, it obviously defines a Lie algebroid with anchor given by the inclusion map. In this case the anchor has constant rank and the normal bundle $\nu_\calF:=TM\slash\calF$ is well defined. It follows that $L_\calF\cong\bigwedge^{\rm top}\nu_\calF^*$ in this case. And nonvanishing invariant sections of this bundle are exactly transversal densities, c.f.\ \cite{ms}.

\subsection{The index theorem}
In this section we explain the statement of the main index theorem of \cite{ppt} for Lie groupoids. 
The structure of this index theorem is the same as that of the original Atiyah--Singer index theorem:
it gives an equality between two complex numbers, one defined {\em analytically} and the other 
{\em topologically}. The proper context of the topological right hand side is that of characteristic classes 
in Lie algebroid  cohomology, the details of which will be explained in \S \ref{Thom}. We will therefore 
introduce the index theorem by first explaining the left hand side, the analytical index.

The right framework for this is that of noncommutative geometry \cite{connes}. A Lie groupoid 
$\sfG\rightrightarrows M$ defines a {\em smooth convolution algebra} 
\[
\calA_\sfG:=\Gamma_\textup{c}^\infty\left( \sfG,s^*{\bigwedge}^\textup{top} \g^* \right),
\]
equipped with the product
\[
(f_1*f_2)(g):=\int_{{t(h)=t(g)}} f_1(gh^{-1}) f_2(h) \, .
\]
It is perhaps not obvious that this multiplication is well-defined, however see \cite{ppt} for an 
explanation. In \cite[\S II.5]{connes}, Connes uses half-densities rather than full densities as above.
This is more convenient for the definition of a $*$-structure, important when one wants to 
consider the completion to a $C^*$-algebra. 


The idea of noncommutative geometry is to consider 
the convolution algebra $\calA_\sfG$ as a noncommutative ``ring of functions'' on a space.
As such, one of the most interesting invariants to compute is the cyclic homology of $\calA_\sfG$, the 
proper generalization of de Rham cohomology to the noncommutative setting.
In the general setting of Lie groupoids, this problem is still open, despite progress in particular cases
such as foliations and group actions. With this problem in mind, the first result of \cite{ppt} is the 
construction of a canonical map
\begin{equation}
\label{eq:char}
\chi:H^\bullet_{\rm diff}(\sfG;L_\g)\to HC^\bullet(\calA_\sfG)
\end{equation}
from the smooth groupoid cohomology with values in the bundle of transversal densities to the 
cyclic cohomology of $\calA_\sfG$. In \cite[\S 1]{ppt} an explicit formula on the cochain level is given,
but the actual origin of the map comes from the ``noncommutative symmetries'' of the algebra $\calA_
\sfG$, given in the form of an action of a Hopf algebroid. In any case, in degree zero, the map 
associates to an invariant transversal density $\Omega\in H^0_{\rm diff}(\sfG;L_\g)=\Gamma^\infty_{\rm inv}(M;L_\g)$ the trace
\[
\tau_\Omega(f):=\int_M f\Omega.
\]
Recall that the Lie groupoid is said to be {\em unimodular} if there exists a nowhere vanishing 
invariant transversal density $\Omega$. Therefore, for unimodular Lie groupoids we always have a 
trace on its convolution algebra. One can view the higher degree parts of the map (\ref{eq:char}) as 
a possible remedy for the absence of a trace in the non-unimodular case.

Next we introduce the {\em universal enveloping algebra} $\calU(\g)$ of the Lie algebroid $\g$. 
There is a completely algebraic construction of this algebra, but here we give a global description 
using the groupoid $\sfG$ integrating $\g$. For this we think of $\sfG$ as being fibered over
$M$ by the target map $t:\sfG\to M$. The tangent bundle along the fibers is then given by
$T_t\sfG:=\ker dt\cong s^*\g$. We now consider the algebra $\calD_{\rm inv}(\sfG)$ of 
families of differential
operators along the $t$-fibers that are $\sfG$-invariant: 
\[
D_{t(g)}U_g=U_g D_{s(g)},\quad D\in \calD_{\rm inv}(\sfG).
\]
Here $D_x,~x\in M$ means the restriction of $D$ to $t^{-1}(x)$, and 
$U_g:C^\infty(t^{-1}(s(g)))\to C^\infty(t^{-1}(t(g)))$ is defined by $(U_gf)(h):=f(hg^{-1})$.
Since such invariant differential operators are completely determined by their restriction to $M$,
their definition only depends on $\g$ and we also write $\calU(\g)$ for $\calD_{\rm inv}(\sfG)$.
This is the enveloping algebra. It carries an obvious filtration $\calU(\g)=\bigcup_{k\geq 0}U_k(\g)$
given by the order of a differential operator.

Obviously we can extend this definition to differential operators acting on equivariant vector bundles. 
Since such vector bundles are always of the form $t^*E$ for some vector bundle $E\to M$, we simply
write $\calU(\g;E)=\calU(\g)\otimes{\rm End}(E)$ for the resulting algebra. Using the fact that the 
fiberwise symbol of elements in $\calD_{\rm inv}(\sfG)$ commutes with the $\sfG$-action, 
the principal symbol map descends to the Lie algebroid level to
\[
\sigma_k:\calU_k(\g;E)\to \Gamma^\infty\left(M;{\rm Sym}^k\g\otimes{\rm End}(E)\right).
\]
We say that $D\in \calU_k(\g)$ is elliptic, if $\sigma_k(D)$ is invertible off the zero section in $\g^*$.
This means that the corresponding family $\tilde{D}\in \calD_{\rm inv}(\sfG)$ is elliptic on each 
fiber of the target map. 

In \cite{nwx}, an invariant pseudodifferential calculus $\Psi^\infty_{\rm inv}(\sfG)$ is described 
which forms a natural enlargement of $\calD_{\rm inv}(\sfG)$. It has the property that its ideal
of smoothing operators $\Psi^{-\infty}_{\rm inv}(\sfG)$ is isomorphic to the convolution algebra
$\calA_\sfG$. Ellipticity implies that $D$ is invertible in $\Psi^\infty_{\rm inv}(\sfG)$ modulo $\Psi^{-\infty}_{\rm inv}(\sfG)$, and therefore standard $K$-theoretic considerations yield an element
\[
{\rm ind}(D)\in K_0(\calA_\sfG),
\]
called the {\em index class}, in the $K$-theory of the convolution algebra.  

It is a fundamental fact in noncommutative geometry that there exists a canonical pairing between
the $K$-theory of an algebra and cyclic cohomology, c.f.\ \cite{connes}. In our case, this induces a
pairing between $H^{\rm ev}_{\rm diff}(\sfG;L_\g)$ and elliptic elements in $\calU(\g;E)$, called the 
{\em higher index}:
\[
{\rm ind}_\nu(D):=\left<\chi(\nu),{\rm ind}(D)\right>\in\C,\quad \nu\in H^{\rm ev}_{\rm diff}(\sfG;L_\g).
\]
When $\sfG$ is the pair groupoid $M\times M\rightrightarrows M$, and 
$\nu=1\in H^0_{\rm diff}(M\times M;L_{TM})$ is the unique trivial class in degree zero, ${\rm ind}_1(D)$ is just
the usual index of an elliptic differential operator. We are now in the position to state the index theorem
of \cite{ppt}. It gives an explicit expression for the higher index pairing as follows:
\begin{theorem}[\cite{ppt}]
\label{index-thm}
Let $\sfG\rightrightarrows M$ be a Lie groupoid over a compact base manifold $M$ with Lie algebroid $\g\to M$. Suppose that $D\in \calU(\g;E)$ is elliptic, where $E\to M$ is a vector bundle. Then, for $\nu\in H^{2k}_{\rm diff}(\sfG;L_\g)$, the following equality holds true;
\[
{\rm Ind}_\nu(D)=\frac{1}{(2\pi \sqrt{-1})^k}\int_{\g^*}\pi^*\Phi_\g(\nu)\wedge{\rm Td}^{\pi^!\g}(\pi^!\g\otimes\C)\wedge {\rm ch}^{\pi^!\g}(\sigma(D))
\] 
\end{theorem}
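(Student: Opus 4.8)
The plan is to compute the higher index pairing $\left<\chi(\nu),{\rm ind}(D)\right>$ by deforming the convolution algebra $\calA_\sfG$ onto the quantized dual $\g^*$ of the Lie algebroid, where an algebraic index theorem produces the characteristic-class integral on the right-hand side. First I would make the index class explicit: choosing a parametrix $Q$ for the elliptic element $D$ inside the invariant pseudodifferential calculus $\Psi^\infty_{\rm inv}(\sfG)$, the remainders $1-QD$ and $1-DQ$ lie in the smoothing ideal $\Psi^{-\infty}_{\rm inv}(\sfG)\cong\calA_\sfG$, and the standard construction of an idempotent from a parametrix represents ${\rm ind}(D)\in K_0(\calA_\sfG)$. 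The pairing with $\chi(\nu)$ then becomes the evaluation of an explicit cyclic cocycle on this idempotent, so the whole problem is reduced to a computation inside (a unitalization of) $\calA_\sfG$ together with its symbol filtration.

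Next I would introduce the adiabatic, or tangent, groupoid $\sfG_{\rm ad}\rightrightarrows M\times[0,1]$ of $\sfG$. This interpolates between $\sfG$ at parameter $\epsilon\neq 0$ and the additive groupoid $\g\rightrightarrows M$ at $\epsilon=0$; fiberwise Fourier transform identifies the convolution algebra of the latter with functions on the total space of $\g^*$. Equivalently, one works directly with the filtration $\calU(\g)=\bigcup_k\calU_k(\g)$, whose associated graded is ${\rm Sym}\,\g=\calC^\infty_{\rm poly}(\g^*)$, so that $\calU(\g)$ is a deformation quantization of $\g^*$ with its linear Lie--Poisson bracket dual to the algebroid structure. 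Because the index is a $K$-theory class of the deformation, it is constant in $\epsilon$, and the pairing may be evaluated at $\epsilon=0$, where the symbol $\sigma(D)$ is precisely a class on $\g^*$ that is invertible off the zero section.

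At $\epsilon=0$ I would invoke the algebraic index theorem in its Lie-algebroid-equivariant form. The cyclic cocycle $\chi(\nu)$ restricts to a trace-type functional on the quantized algebra, and pairing it with the symbol idempotent is governed by the Fedosov--Nest--Tsygan local formula: the outcome is an integral over $\g^*$ of the Chern character ${\rm ch}^{\pi^!\g}(\sigma(D))$, corrected by the Todd class ${\rm Td}^{\pi^!\g}(\pi^!\g\otimes\C)$ of the complexified pullback algebroid $\pi^!\g$, and weighted by the characteristic class $\Phi_\g(\nu)$ determined by $\nu$ through the van Est map from differentiable groupoid cohomology to Lie algebroid cohomology. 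The normalization $(2\pi\sqrt{-1})^{-k}$ is forced by the degree-$2k$ part of the Chern character, yielding exactly the stated formula.

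I expect the main obstacle to be the identification of $\chi(\nu)$, defined through the Hopf-algebroid characteristic map on $\calA_\sfG$, with the trace density on the quantized $\g^*$ encoded by $\Phi_\g(\nu)$: this requires matching the groupoid-cohomological construction of $\chi$ with the van Est map, and controlling the behavior of the cocycle as $\epsilon\to 0$. Closely tied to this is pinning down the precise Todd correction---verifying that it is the Todd class of $\pi^!\g\otimes\C$ rather than of some other bundle---which is what makes the local index density in the algebroid setting differ from the classical one. Establishing this equivariant local index formula is the technical crux of the argument.
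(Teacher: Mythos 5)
Your proposal follows essentially the same route as the paper's: Theorem \ref{index-thm} is proved in \cite{ppt} precisely by viewing $\calU(\g)$, through its order filtration with associated graded ${\rm Sym}\,\g$, as a deformation quantization of the Lie--Poisson structure on $\g^*$ and then invoking the algebraic index theorem of Fedosov--Nest--Tsygan type, with the van Est map $\Phi_\g$ mediating between the Hopf-algebroid characteristic map $\chi(\nu)$ and the trace density, exactly as you outline. The two issues you single out as the technical crux---identifying $\chi(\nu)$ with the class $\Phi_\g(\nu)$ and verifying that the local index density involves ${\rm Td}^{\pi^!\g}(\pi^!\g\otimes\C)$---are indeed where the substantive work in \cite{ppt} is carried out.
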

So far, we have explained the ingredients of the analytical left hand side of the equality in this index 
theorem.
 The topological right hand side of the index theorem is given in the framework of Lie algebroids and their cohomology, and its details will become clear in the next section. Let us already briefly summarize its main ingredients:
 \begin{itemize}
 \item $\pi^!\g$ denotes the pull-back Lie algebroid of $\g$ along the projection $\pi:\g^*\to M$. All
 the terms in the integrand on the right hand side are classes in the Lie algebroid cohomology of
 $\pi^!\g$. In \S \ref{integration} we explain how the integral is defined on this cohomology.
 \item $\Phi_\g:H^\bullet_{\rm diff}(\sfG;L_\g)\to H^\bullet(\g;L_\g)$ is the van Est map for the Lie 
 groupoid $\sfG$. Furthermore, $\pi^*:H^\bullet(\g;L_\g)\to H^\bullet(\pi^!\g;\pi^*L_\g)$ is the obvious 
 pull-back map, an isomorphism on the level of cohomology, c.f.\ \cite{crainic}.
 \item ${\rm Td}^{\pi^!\g}(\pi^!\g\otimes\C)$ and ${\rm ch}^{\pi^!\g}(\sigma(D))$ are characteristic classes in the Lie algebroid cohomology of $\pi^!\g$, called the Todd genus, c.f. equation \eqref{todd}, and the Chern character, c.f. equation \eqref{chern}.
 \end{itemize}
In \cite[\S 7]{ppt} it is discussed how to obtain the Atiyah--Singer index theorem and the index theorem 
for families and foliations from Theorem \ref{index-thm} above. The proof of Theorem \ref{index-thm} 
given in \cite{ppt} is by means of the algebraic index theorem for the Lie--Poisson structure on $\g^*$. 
In \cite{pptl2}, we have generalized Theorem \ref{index-thm} to study the index of an invariant elliptic 
operator for a proper cocompact $\mathsf{G}$-action. Applied to the canonical action of $\sfG$ on 
itself by left translation, we recover the index theorem above.

\section{The Thom isomorphism for Lie algebroids}
\label{Thom}
The aim of this section is to prove a version of the Thom isomorphism valid for general Lie algebroids.
In this context it is best to think of a Lie algebroid as a generalization of the tangent bundle of a 
manifold. Indeed, the Lie algebroid of the pair groupoid is equal to the tangent bundle, whose Lie 
algebroid cohomology is the usual de Rham cohomology of the underlying manifold. In this case 
we find back the usual Thom isomorphism for the cohomology of manifolds. 

\subsection{Representations and Lie algebroid cohomology}
As before, we let $\g\to M$ be Lie algebroid over a compact manifold $M$. Let $E\to M$ be a vector 
bundle (over $\R$ or $\C$). We define 
$\Omega^k_\g(E):=\Gamma^\infty(M;\bigwedge^k\g^*\otimes E)$, the space Lie algebroid $k$-forms
with values in $E$.
A {\em $\g$-connection} on $E$ is a linear operator
\begin{equation}
\label{connection}
\nabla:\Omega^0_\g(E)\to\Omega^1_\g(E),
\end{equation}
satisfying the Leibniz rule
\[
\nabla_X(fs)=f\nabla_X(s)+df(\rho(X))s,\quad X\in\Gamma^\infty(M;\g),~f\in C^\infty(M),~s\in\Gamma^\infty(M;E).
\]
It has a curvature tensor $R(\nabla)\in\Omega^2_\g({\rm End}(E))$ defined in the usual way by
\[
R(\nabla)(X,Y)=[\nabla_X,\nabla_Y]-\nabla_{[X,Y]}.
\]
When $R(\nabla)=0$, we say that $\nabla$ defines a {\em representation} of $\g$ on $E$. 
When $\sfG\rightrightarrows M$ is a Lie groupoid integrating $\g$, a representation of $\sfG$ on 
$E$ induces a representation of $\g$ on $E$. The trivial representation is defined on the trivial 
bundle $M\times\C\to M$ equipped with the $\g$-connection $\nabla_X(f):=\rho_\g(X)\cdot f$.

For a representation $(E,\nabla)$ of $\g$, the Lie algebroid cohomology complex is given by
$\Omega^\bullet_\g(E)$ with differential $d_\g:\Omega^k_\g(E)\to\Omega^{k+1}_\g(E)$ given by the usual Koszul formula:
\begin{equation}
\label{diff}
\begin{split}
d_\g\alpha(X_0,\ldots,X_{k}):=&\sum_{i=0}^k(-1)^i\nabla_{X_i}\alpha(X_0,\ldots,\hat{X}_i,\ldots,X_k)\\
&+
\sum_{i<j}(-1)^{i+j-1}\alpha([X_i,X_j],X_1,\ldots,\hat{X}_i,\ldots,\hat{X}_j,\ldots,X_{k}),
\end{split}
\end{equation}
where the hat means omission from the argument. 
When $\g=TM$, this definition agrees with the de Rham differential, and 
when $\mathfrak{g}$ is a Lie algebra, it reduces to the Chevalley--Eilenberg differential defining Lie 
algebra cohomology. We write $H^\bullet(\g;E)$ for its cohomology, for the trivial representation we 
simply write $H^\bullet(\g)$.

\subsection{Constructions of Lie algebroids}
For the Thom isomorphism we need several constructions with Lie algebroids, discussed in detail in 
\cite[\S 4]{mackenzie}. Here we briefly recall these constructions and indicate the corresponding 
algebraic structure on the cohomology induced by them.

\subsubsection*{Direct products of Lie algebroids}
Let $\g_1\to M_1$ and $\g_2\to M_2$ be two Lie algebroids over (possibly) different base manifolds. 
Consider the product manifold $M_1\times M_2$ equipped with the two projections $pr_1$ and $pr_2$ to $M_1$ resp. $M_2$. As shown in \cite[\S 4.2.]{mackenzie}, there is a unique Lie algebroid structure on the vector bundle $pr_1^*\g_1\oplus pr_2^*\g_2^*$ whose Lie bracket reduces to that of $\g_1$ and $\g_2$ for sections taking values in $\g_1$ and $\g_2$ that are constant along $M_2$ and $M_1$. Furthermore, the bracket between $\g_1$ and $\g_2$ is set to zero. On the level of Lie algebroid cohomology this leads to the external product map
\begin{equation}
\label{ext-prdt}
H^i(\g_1;E_1)\otimes H^j(\g_2;E_2)\to H^{i+j}(pr_1^*\g_1\oplus pr^*_2\g_2^*;E_1\boxtimes E_2),
\end{equation}
where $E_1$ is a representation of $\g_1$ and $E_2$ of $\g_2$.
\subsubsection*{Pull-back Lie algebroids}
Let $f:P\to M$ be a surjective submersion and $\g\to M$ a Lie algebroid over $M$. The pull-back Lie algebroid $f^!\g$ is defined on the vector bundle with typical fiber given at $p\in P$ by
\[
(f^!\g)_p=\{(X,\xi)\in \g_{f(p)}\oplus T_pP,~\rho_\g(X)=df(\xi)\}.
\]
The anchor is simply given by $\rho_{f^!\g}(X,\xi)=\xi$. The Lie bracket is defined by linear extension of 
the formula
\[
[(f_1X_1,\xi_1),(f_2X_2,\xi_2)]=(f_1f_2[X_1,X_2]+\xi_1(f_2)X_2-\xi_2(f_1)X_1,[\xi_1,\xi_2]),
\]
where $f_1,f_2\in C^\infty(P)$, $X_1,X_2\in \Gamma^\infty(M,\g)$ and $\xi_1,\xi_2\in\mathfrak{X}(P)$.

With this definition, for any representation $E$ of $\g$, its pull-back $f^*E$ carries a canonical representation of $f^!\g$. Furthermore, the pull-back along $f$ induces a morphism $f^*: \Omega_\g(E)\to\Omega^\bullet_{f^!\g}(f^*E)$ commuting with the differentials. Theorem 2 of \cite{crainic} states 
that the induced map
\[
f^*:H^\bullet(\g;E)\to H^\bullet(f^!\g;f^*E)
\]
is an isomorphism in degree $k\leq n$ when $f$ has homologically $n$-connected fibers. 

\subsubsection*{Morphisms of Lie algebroids} 
The notion of a morphism of Lie algebroids is a bit subtle if we have to change base: 
Let $\g_1\to M_1$ and $\g_2\to M_2$ be two Lie algebroids, and suppose that there exists a vector bundle morphism
\[
\xymatrix{\g_1\ar[r]^\varphi\ar[d]&\g_2\ar[d]\\ M_1\ar[r]^f&M_2}.
\]
Such a morphism of vector bundles induces a well-defined pull-back of sections: $(f,\varphi)^*:\Gamma^\infty(M_2;\g_2)\to\Gamma^\infty(M_1;\g_1)$.
We say that the pair $(f,\varphi)$ is a {\em morphism of Lie algebroids} if 
the following conditions hold true:
\begin{itemize}
\item[$i)$] (Compatibility with anchors)
\[
\rho_2\circ\varphi=Tf\circ\rho_1.
\]
\item[$ii)$] (Compatibility with brackets) If $X_1,Y_1\in\Gamma^\infty(M_1,\g_1)$ are mapped to $\varphi(X_1)=\sum_if_iX_{2,i}$ resp. $\varphi(Y_1)=\sum_jh_jY_{2,j}$, then 
\[
\varphi([X_1,Y_1])=\sum_{i,j}f_ih_j[X_{1,i},Y_{1,j}]+\sum_j \rho_1(X_1)(h_j) Y_{2,j}-\sum_i\rho_1(Y_1)(f_i)X_{2,i}.
\] 
\end{itemize}
These conditions are equivalent to requiring that the pull-back $(f,\varphi)^*:\Omega^\bullet_{\g_2}\to \Omega^\bullet_{\g_1}$ is a morphism of cochain complexes, i.e., commutes with the Chevalley--Eilenberg differential \eqref{diff}. More generally, for $E$ a representation of $\g_2$, one easily checks that $f^*E$, carries a canonical representation of $\g_1$. With this structure, the pull-back is defined
as a morphism $(f,\varphi)^*:\Omega^\bullet_{\g_2}(E)\to \Omega^\bullet_{\g_1}(f^*E)$ of cochain complexes.
On the level of cohomology we therefore obtain a map
\begin{equation}
\label{pull-back}
(f,\varphi)^*:H^\bullet(\g_2;E)\to H^\bullet(\g_1;f^*E).
\end{equation}
\subsection{The Lie algebroid Thom class}
We now come to the actual proof of the Thom isomorphism. In the following we denote by $Th^r\in H_c^r(\R^r)$ the canonical generator of the compactly supported cohomology of $\R^r$.
Let $\pi: E\to M$ be an orientable vector bundle of rank $r$. Choosing an orientation, we can define its {\em Thom class} $Th(E)\in H_c^r(E)$ uniquely by requiring the property that $Th(E)$ restricts consistently to $Th^r$ at each fiber $E_x\cong\R^r$ for all $x\in M$. The Thom class
defines an isomorphism
\[
H^k(M)\stackrel{\cong}{\longrightarrow} H^{k+r}_c(E),\quad \alpha\mapsto \pi^*\alpha\wedge Th(E),
\]
for all $\alpha\in H^k(M)$. Restriction to $M$, embedded via the zero section gives the Euler class $e(E)=Th(E)|_M$. In this section we will prove a similar statement for Lie algebroids.

Let, as before, $\g\to M$ be a Lie algebroid with anchor $\rho_\g:\g\to TM$, and $\pi:E\to M$ a vector bundle of rank $r$. 
\begin{definition}
The Lie algebroid Thom class is defined as
\begin{equation}
\label{def-thom}
Th^\g(E):=\rho_{\pi^!\g}^*Th(E)\in H^r_c(\pi^!\g).
\end{equation}
\end{definition}
The crucial properties of the Lie algebroid Thom class are summarized in the following theorem:
\begin{theorem}
\label{thom-iso}
Let $\g\to M$ (and $\g'\to M'$) be a Lie algebroid over a compact manifold $M$ (and $M'$) and $\pi:E\to M$ a smooth, oriented, rank $r$  vector bundle. The Thom class $Th^\g(E)\in H^r_c(\pi^!\g)$ is the unique cohomology class
with the following properties:
\begin{itemize}
\item[$i)$] For any morphism $(f,\varphi):\g'\to \g$ of Lie algebroids we have
\[
(f,\varphi)^*Th^\g(E)=Th^{\g'}(f^*E),
\]
\item[$ii)$] For $x\in M$, the inclusion map $i_x:E_x\to E$ extends to a morphism of Lie algebroids
\[
(i_x,\iota_x):(E_x,TE_x)\to (E,\pi^!\g)
\]
 under which 
 \[
 (i_x,\iota_x)^*Th^\g(E)=Th^r\in H^r_c(E_x)\cong H^r_c(\R^r).
 \]
 \item[$iii)$] The map $\Psi_E:H^\bullet(\g)\to H^{r+\bullet}_c(\pi^!\g)$ defined by
 \[
 \alpha\mapsto \pi^*\alpha\wedge Th^\g(E),
 \]
is an isomorphism $H^k(\g)\cong H^{k+r}_c(\pi^!\g)$. 
\end{itemize}
\end{theorem}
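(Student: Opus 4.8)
The entire theorem is organized around the fact that the anchor $\rho_{\pi^!\g}:\pi^!\g\to TE$ is a morphism of Lie algebroids over $\mathrm{id}_E$, so that $\rho_{\pi^!\g}^*$ is a map of (compactly supported) cochain complexes from de Rham forms on $E$ to $\pi^!\g$-forms; this is exactly what makes $Th^\g(E)=\rho_{\pi^!\g}^* Th(E)$ well defined. My plan is therefore to obtain $i)$ and $ii)$ as formal consequences of this definition together with the naturality of the ordinary oriented Thom class $Th(E)$, and to reserve the real work for the isomorphism $iii)$, which I will prove by a Mayer--Vietoris induction over a trivializing cover. Uniqueness will then drop out of $ii)$ and $iii)$.

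For $i)$, a morphism $(f,\varphi):\g'\to\g$ over $f:M'\to M$ induces a bundle map $\tilde f:f^*E\to E$ covering $f$ and a morphism $(\tilde f,\tilde\varphi):(\pi')^!\g'\to\pi^!\g$ of pull-back algebroids, where $\pi':f^*E\to M'$. The anchor-compatibility clause $i)$ in the definition of a Lie algebroid morphism says exactly that $d\tilde f\circ\rho_{(\pi')^!\g'}=\rho_{\pi^!\g}\circ\tilde\varphi$, hence on forms $\tilde\varphi^*\circ\rho_{\pi^!\g}^*=\rho_{(\pi')^!\g'}^*\circ\tilde f^*$. Feeding in the naturality of the ordinary Thom class, $\tilde f^* Th(E)=Th(f^*E)$, gives $(\tilde f,\tilde\varphi)^* Th^\g(E)=Th^{\g'}(f^*E)$. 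For $ii)$, I first check that $\iota_x(\xi):=(0,di_x\xi)$ takes values in $\pi^!\g$ --- since $\pi\circ i_x$ is constant, $d\pi(di_x\xi)=0=\rho_\g(0)$ --- and that $(i_x,\iota_x):(E_x,TE_x)\to(E,\pi^!\g)$ is a Lie algebroid morphism whose composite with the anchor is the tangent map $di_x$. Consequently $(i_x,\iota_x)^* Th^\g(E)=i_x^* Th(E)$, which equals the standard generator $Th^r$ by the defining normalization of the de Rham Thom class.

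The heart of the matter is $iii)$, and the strategy is the classical argument adapted to Lie algebroids, carried out with compact vertical supports $H^\bullet_{cv}$ (which agree with $H^\bullet_c$ because $M$ is compact). The key local input is that over an open $U\subseteq M$ with $E|_U\cong U\times\R^r$ and $\pi=pr_1$ the restricted pull-back algebroid is a direct product,
\[
\pi^!\g\big|_{E|_U}\;\cong\;pr_1^*(\g|_U)\oplus pr_2^*T\R^r\;=\;\g|_U\times T\R^r ,
\]
as one reads off from the fiber descriptions of $\pi^!\g$ and of the direct product. Because the $T\R^r$-directions are genuine manifold directions, integration along $\R^r$ defines a cochain map $\int_{\R^r}:\Omega^{\bullet}_{cv}(\g|_U\times T\R^r)\to\Omega^{\bullet-r}(\g|_U)$, and a fiberwise homotopy (the compactly supported Poincar\'e lemma) shows it is inverse, up to the generator $Th^r$ of $H^r_c(T\R^r)\cong H^r_c(\R^r)$, to wedging with that generator. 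Using $Th(U\times\R^r)=pr_2^* Th^r$ one identifies this wedge with $\Psi_{E|_U}=\pi^*(\cdot)\wedge Th^\g(E)|_{E|_U}$, establishing $iii)$ locally. To globalize, I cover the compact $M$ by finitely many such opens and induct on their number: restriction along open inclusions gives Mayer--Vietoris sequences for $H^\bullet(\g)$ on the base and for $H^{\bullet+r}_{cv}(\pi^!\g)$ on the total space, and by the naturality established in $i)$ the class $Th^\g(E)$ restricts to the local Thom classes, so $\Psi_E$ is a morphism between these two long exact sequences. The five lemma then promotes the local isomorphisms to the global one.

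Finally, uniqueness follows from $ii)$ and $iii)$: if $T'\in H^r_c(\pi^!\g)$ satisfies the same properties, then by the degree-zero case of the isomorphism $iii)$ we may write $T'=\pi^*\beta\wedge Th^\g(E)$ for a unique $\beta\in H^0(\g)$, and restricting to a fiber via $ii)$ yields $\beta(x)\,Th^r=Th^r$ for every $x$, whence $\beta\equiv 1$ and $T'=Th^\g(E)$. The step I expect to be the genuine obstacle is not conceptual but infrastructural: establishing intrinsically for Lie algebroid cohomology both the Mayer--Vietoris sequence with compact vertical supports and the compactly supported Poincar\'e lemma for the product $\g|_U\times T\R^r$, and verifying that all the comparison maps commute strictly with wedging by $Th^\g(E)$ so that the five lemma genuinely applies.
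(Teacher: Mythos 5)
Your proposal is correct and follows essentially the same route as the paper: properties $i)$ and $ii)$ are deduced formally from the naturality of the ordinary Thom class transported through the anchor $\rho_{\pi^!\g}$, and $iii)$ is proved by first handling the trivial bundle case via the splitting $\pi^!\g|_{E|_U}\cong pr_1^*(\g|_U)\oplus pr_2^*T\R^r$ and then globalizing with a Mayer--Vietoris/five-lemma induction over a finite good cover, exactly as in the paper. The only differences are minor and to your credit: where the paper simply invokes the external product isomorphism $H^\bullet(\g|_U)\otimes H^\bullet_c(\R^r)\cong H^\bullet_c\bigl(pr_1^*(\g|_U)\oplus pr_2^*T\R^r\bigr)$ for the local step, you prove it by fiber integration along $\R^r$ and the compactly supported Poincar\'e lemma (the standard proof of that K\"unneth-type statement), and you make explicit the uniqueness argument --- writing $T'=\pi^*\beta\wedge Th^\g(E)$ with $\beta\in H^0(\g)$ and forcing $\beta\equiv 1$ by fiberwise restriction via $ii)$ --- which the theorem asserts but the paper's proof leaves implicit.
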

\begin{proof}
$i)$ The ordinary Thom class $Th(E)\in H^r_c(E)$ satisfies the naturality assumption
\[
Th(f^*E)=f^*Th(E),
\]
for a smooth map $f:M'\to M$. The first property therefore follows from the fact that for a morphism $(f,\varphi)$ of Lie algebroids, the pull-back \eqref{pull-back} fits into a commutative diagram
\begin{equation}
\label{useful}
\xymatrix{
H_c^\bullet(f^*E)\ar[r]^{f^*}\ar[d]_{\rho_2^*}&H_c^\bullet(E)\ar[d]^{\rho_1^*}
\\
H_c^\bullet(\pi^!\g')\ar[r]^{(f,\varphi)^*}&H_c^\bullet(\pi^!\g)}.
\end{equation}

$ii)$ For $x\in X$, we denote by $i_x:E_x\to E$ the inclusion of the fiber along $x$. Let $\xi\in T_eE_x$ be a tangent vector. Since $Ti_x(\xi)\in T_eE$ projects to zero along $T\pi$, we see that $(0,Ti_x(\xi))\in(\pi^!\g)_e$ which defines a map $\iota_x:TE_x\to\pi^!\g$. Since this map is basically just the derivative of the inclusion map $i_x$, it is easily seen that the pair $(i_x,\iota_x)$ forms a morphism of Lie algebroids, i.e., it is compatible with the commutators and the anchor maps. We can therefore pull-back the Lie algebroid Thom class. Since the Lie algebroid Thom class satisfies the property that $i_x^*Th(E)$ equals the generator $Th(E_x)$ of $H^r_c(E_x)$, property $ii)$ easily follows.

$iii)$ Let us first consider the case that $E\to M$ is trivial: $E\cong M\times\R^r$. In this case we have 
$\pi^!\g\cong pr_1^*\g\oplus pr_2^*T\R^r$ so that 
\[
H^\bullet(\g)\otimes H_c^\bullet(\R^r)\stackrel{\cong}{\longrightarrow}
H^\bullet_c(\pi^!\g),
\]
by means of the external product \eqref{ext-prdt}. The Thom class $Th^\g(M\times\R^r)\in H^r(\pi^!\g)$ corresponds on the left hand side to the generator $1\otimes Th^r$ of $H_c^\bullet(\R^r)$, so this proves the result for a trivial vector bundle.

The general case is proved by a Mayer--Vietoris argument, analogous to the proof of the ordinary Thom isomorphism in \cite[Thm 6.17]{bt}: For two open subsets $U$ and $V$ of $M$, the restriction maps give a sequence
\[
0\to\Omega^\bullet_{\pi^!\g,cv}(E|_{U\cup V})\to \Omega^\bullet_{\pi^!\g,cv}(E|_U)\oplus \Omega^\bullet_{\pi^!\g,cv}(E|_{V})\to \Omega^\bullet_{\pi^!\g,cv}(E|_{U\cap V})\to 0,
\]
where $cv$ stands for compact support along the fibers of $E$. As for the ordinary Mayer--Vietoris sequence, c.f.\ \cite[\S 2]{bt}, a partition of unity subordinate to $U,V$ allows to prove that this sequence is exact. We apply the Thom map $\Psi_E$ to the induced  long exact sequence  in cohomology to find:
\[
\minCDarrowwidth15pt
\begin{CD}
\ldots@>>>
H^\bullet_{cv}(\g|_{U})\oplus H^\bullet_{cv}(\g|_{V})@>>>H^\bullet_{cv}(\g|_{U\cap V})@>\delta>> H^{\bullet+1}_{cv}(\g|_{U\cup V})@>>>\ldots\\
@. 
@VV\Psi_{E|_{U}}\oplus \Psi_{E|_{V}}V @VV\Psi_{E|_{U\cap V}}V @VV\Psi_{E|_{U\cup V}}V @. \\
\ldots @>>>
H^{\bullet+r}_{cv}(\pi^!\g|_{U})\oplus H^{\bullet+r}_{cv}(\pi^!\g|_{V})@>>> H^{\bullet+r}_{cv}(\pi^!\g|_{{U\cap V}}) @>\delta>> H^{\bullet+r+1}_{cv}(\pi^!\g|_{{U\cup V}}) @>>>\ldots
\end{CD}
\]
Here $\pi^!\g|_U$ stands short for $\pi^!\g|_{E|_U}$, etc.  It is easy to check that each square in this diagram commutes. Now suppose that $U$ and $V$ are such that $E|_U$ and $E|_V$ are trivial, and $U\cap V$ contractible. Then $E|_{U\cap V}$ is trivial as well, so that $\Psi_{E|_U}$, $\Psi_{E|_V}$ and 
$\Psi_{E|_{U\cap V}}$ are isomorphisms by the previous part of the proof above. By the five-Lemma, $\Psi|_{E|_{U\cup V}}$ must be an isomorphism as well. By choosing a good cover of $M$, the general result is proved by induction on the cardinality of the cover.
\end{proof}

The map $i:M\to E$ embedding $M$ by means of the zero section fits into a diagram
\[
\xymatrix{\g\ar[r]^\iota\ar[d]&\pi^!\g\ar[d]\\ M\ar[r] &E}
\]
where $\iota(X)=(X,\rho(X))\in(\pi^!E)_{i(x)}$, for all $X\in \g_x, x\in M$. 
We can therefore pull-back the Thom class to get the {\em Lie algebroid} Euler class
\begin{equation}
\label{euler}
e^\g(E):=(i,\iota)^*Th^\g(E)\in H^r(\g).
\end{equation}
In fact, it is not difficult to verify that $e^\g(E)=\rho_\g^*e(E)$, where $e(E)\in H^r(M)$ is the usual Euler characteristic of the vector bundle $E$ in the ordinary cohomology of $M$.

\subsection{Integration}
\label{integration}
Recall that $\sfG\rightrightarrows M$ is said to be {\em unimodular} if there exists a non vanishing, invariant ``volume form'' 
$\Omega\in\Gamma^\infty_{\rm inv}\left(M;L_\g\right)$, where $L_\g$ is the bundle of densities \eqref{density-bundle}.  With such an invariant volume form, we can define an integration map on Lie algebroid cohomology as follows: for $\alpha\in \Omega^{\rm top}_\g$, we pair with $\Omega$ to obtain a top degree form on $M$ that we can integrate:
\[
\alpha\mapsto \int_M\left<\alpha,\Omega\right>.
\]
Recall that $M$ is assumed to be compact, otherwise we have to assume the Lie algebroid cochains
to have compact support on $M$ in the following discussion. 
\begin{lemma}
The integration map above descends to cohomology, i.e.,
\[
\int_M\left<d_\g\beta,\Omega\right>=0,\quad \mbox{for all}~\beta\in \Omega^{r-1}_\g.
\]
\end{lemma}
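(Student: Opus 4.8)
The plan is to establish a Stokes-type theorem for the Lie algebroid $\g$ relative to the invariant density $\Omega$. Concretely, I will produce for each $\beta\in\Omega^{r-1}_\g$ a de Rham form $\eta_\beta\in\Omega^{\dim M-1}_{\mathrm{dR}}(M)$ with the property that
\[
\langle d_\g\beta,\Omega\rangle = d_{\mathrm{dR}}\eta_\beta .
\]
Since $M$ is compact and without boundary, the classical Stokes theorem then yields $\int_M\langle d_\g\beta,\Omega\rangle=\int_M d_{\mathrm{dR}}\eta_\beta=0$, which is exactly the assertion. (In the noncompact situation one restricts to compactly supported cochains and the same argument applies.)

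The form $\eta_\beta$ is built naturally from the available data, and it is manifestly intrinsic. Contracting $\beta\in\Gamma^\infty(M;\bigwedge^{r-1}\g^*)$ into the $\bigwedge^{\mathrm{top}}\g$-factor of $\Omega$ produces a $\g$-valued density $\iota_\beta\Omega\in\Gamma^\infty(M;\g\otimes\bigwedge^{\mathrm{top}}T^*M)$; applying the anchor $\rho_\g$ and contracting the resulting vector field against the volume part of $\Omega$ gives
\[
\eta_\beta := \iota_{\rho_\g(\iota_\beta\Omega)}\,\Omega .
\]
Because every operation here is canonical, $\eta_\beta$ is a globally defined $(\dim M-1)$-form; in particular no partition of unity is needed. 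In a local frame $e_1,\dots,e_r$ of $\g$ with $\rho_\g(e_i)=\rho_i$ and $\Omega=\phi\,(e_1\wedge\cdots\wedge e_r)\otimes\mathrm{vol}$, writing $\beta_i:=\beta(e_1,\dots,\widehat{e_i},\dots,e_r)$, one has $\eta_\beta=\sum_i(-1)^{i-1}\beta_i\phi\,\iota_{\rho_i}\mathrm{vol}$.

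The heart of the argument is the identity $d_{\mathrm{dR}}\eta_\beta=\langle d_\g\beta,\Omega\rangle$, and this is where invariance of $\Omega$ is used decisively. Expanding $d_{\mathrm{dR}}\eta_\beta$ with the standard identities $d_{\mathrm{dR}}\iota_{\rho_i}\mathrm{vol}=\operatorname{div}(\rho_i)\,\mathrm{vol}$ and $dg\wedge\iota_{\rho_i}\mathrm{vol}=\rho_i(g)\,\mathrm{vol}$ produces the anchor-derivative terms $\sum_i(-1)^{i-1}\rho_i(\beta_i)\phi\,\mathrm{vol}$ together with terms proportional to $\rho_i(\phi)+\phi\operatorname{div}(\rho_i)$. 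Invariance $\nabla\Omega=0$ for the canonical representation of $\g$ on $L_\g$ unwinds, in the chosen frame, to the modular relation $\rho_i(\phi)+\phi\operatorname{div}(\rho_i)+\phi\sum_k c^k_{ik}=0$, where $[e_i,e_j]=\sum_k c^k_{ij}e_k$. Substituting it converts those terms into the structure-function contributions $-\phi\sum_{i,k}(-1)^{i-1}\beta_i c^k_{ik}\,\mathrm{vol}$. On the other side, expanding $\langle d_\g\beta,\Omega\rangle$ directly from the Koszul formula \eqref{diff} gives the same anchor-derivative sum, plus a double sum over the brackets $[e_i,e_j]$ in which only the terms with a repeated frame index survive. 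Since both sides of the proposed identity are intrinsic tensorial expressions, verifying them in a convenient local frame around each point establishes the identity globally.

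The step I expect to be the main obstacle is the bookkeeping that matches these two structure-function expressions while keeping the alternating signs of the Koszul differential consistent with those appearing in $\eta_\beta$; this is a finite, purely combinatorial verification, but it is where sign errors are most likely to enter. Once it is confirmed that the bracket terms arising from $d_\g\beta$ agree with the $\sum_{i,k}c^k_{ik}$-terms produced by invariance, the identity follows. It is worth noting that, without invariance, the same computation yields $d_{\mathrm{dR}}\eta_\beta=\langle d_\g\beta,\Omega\rangle$ only up to a term pairing $\beta$ against the modular cocycle of $(\g,\Omega)$, which vanishes precisely when $\Omega$ is invariant; this makes transparent why the integration map descends to cohomology exactly in the unimodular situation.
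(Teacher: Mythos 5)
Your proof is correct, and it differs from the paper's in the most basic way possible: the paper does not prove this lemma at all --- its entire proof is the citation \cite[\S 5]{elw}, where the statement is established as part of the Evens--Lu--Weinstein pairing between $H^k(\g)$ and $H^{r-k}(\g;L_\g)$. What you supply is a self-contained, frame-level version of the mechanism behind that reference: exhibit $\left<d_\g\beta,\Omega\right>$ as an exact de Rham form and invoke Stokes on the compact, boundaryless $M$. I checked the sign bookkeeping you flagged as the main risk, and it closes. In a frame $e_1,\ldots,e_r$ with $[e_i,e_j]=\sum_k c^k_{ij}e_k$, only the terms with a repeated index survive in the bracket part of the Koszul formula, as you say, and the double sum collapses to
\begin{equation*}
(d_\g\beta)(e_1,\ldots,e_r)=\sum_{i=1}^r(-1)^{i-1}\bigl(\rho_i(\beta_i)-\beta_i T_i\bigr),\qquad T_i:=\sum_k c^k_{ik}
\end{equation*}
(computed with the standard Koszul signs; the exponent $i+j-1$ printed in \eqref{diff} appears to be a typo for $i+j$, since otherwise $d_\g^2\neq 0$). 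Meanwhile flatness $\nabla_{e_i}\Omega=0$ of the canonical representation on $L_\g$ unwinds, exactly as you claim, to $\rho_i(\phi)+\phi\operatorname{div}(\rho_i)+\phi T_i=0$, and substituting this into $d_{\mathrm{dR}}\eta_\beta=\sum_i(-1)^{i-1}\bigl(\phi\,\rho_i(\beta_i)+\beta_i(\rho_i(\phi)+\phi\operatorname{div}\rho_i)\bigr)\mathrm{vol}$ gives $d_{\mathrm{dR}}\eta_\beta=\left<d_\g\beta,\Omega\right>$ on the nose. One notational slip: $\eta_\beta=\iota_{\rho_\g(\iota_\beta\Omega)}\Omega$ does not literally typecheck, since $\Omega$ is a section of $L_\g$ rather than a form on $M$; the intended operation is the internal contraction $TM\otimes\bigwedge^{\mathrm{top}}T^*M\to\bigwedge^{\mathrm{top}-1}T^*M$ applied to $\rho_\g(\iota_\beta\Omega)$, which is what your local formula computes --- and since that object is canonically defined, your framewise verification does establish the identity globally. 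Compared with the paper's one-line citation, your route buys transparency: in particular your closing remark, that without invariance the defect is the pairing of $\beta$ against the modular cocycle of $(\g,\Omega)$, correctly isolates why unimodularity enters and why the paper's subsequent twisted integration theorem replaces trivial coefficients by $L_\g$ in the non-unimodular case.
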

\begin{proof}
This is proved in \cite[\S 5]{elw}.
\end{proof}
Next, we consider the compatibility of the Thom isomorphism with integration. For this, 
we  consider the case $E=\g^*$, the dual of the Lie algebroid itself. As remarked in \cite{ppt}, the pull-back Lie algebroid $\pi^!\g$ has a canonical symplectic structure
$\Theta\in\Omega^2_{\pi^!\g}$ defined by 
\begin{equation}
\label{sympl-form}
\Theta(X,Y):=\left<p_1(X),p_2(Y)\right>-\left<p_1(Y),p_2(X)\right>,
\end{equation}
where $\left<~,~\right>$ means the canonical pairing between $\g$ and $\g^*$ and  $p_1,p_2$ fit into the commutative diagram
\[
\xymatrix{\pi^!\g\ar[r]^{p_1}\ar[d]_{p_2}&\g^*\ar[d]^\pi\\\g\ar[r]&M}.
\]
We now consider the element $\Omega_{\pi^!\g}:=\Theta^r\otimes\pi^*\Omega$. Since $(\pi^!\g)_\xi=\g_{\pi(\xi)}\oplus \g^*_{\pi(\xi)}$ for all $\xi\in \g^*$, we have
\begin{equation}
\label{volume-pull-back}
(\Theta^r\otimes\pi^*\Omega)_\xi\in \bigwedge^{\rm top} \g_{\pi(\xi)}^*\otimes\bigwedge^{\rm top}\g_{\pi(\xi)}\otimes\bigwedge^{\rm top}\g_{\pi(\xi)}\otimes \bigwedge^{\rm top}T_{\pi(\xi)}^*M
\end{equation}
and we can view $\Theta^r\otimes\pi^*\Omega$ as a ``volume form'' on $\pi^!\g$. According to  
\cite[Lemma 4.9]{ppt}, this volume form is invariant and therefore defines an integration map
\[
\int_{\g^*}\left<-,\Omega_{\pi^!\g}\right>:H^{\rm top}_c(\pi^!\g)\to\C.
\]
\begin{proposition}
\label{Thom-int}
Let $\alpha\in H^r(\g)$. Then the following equality holds true:
\[
\int_M\left<\alpha,\Omega\right>=\int_{\g^*}\left<\pi^*\alpha\wedge Th^\g(\g^*),\Omega_{\pi^!\g}\right>.
\]
\end{proposition}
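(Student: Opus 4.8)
The identity is the Lie algebroid analogue of the classical fibre-integration formula $\int_E\pi^*\alpha\wedge Th(E)=\int_M\alpha$, which expresses the compatibility of the ordinary Thom isomorphism with integration over the fibres, and the plan is to reduce the statement to that formula fibrewise. Note first that both sides are linear in $\alpha$ and descend to $H^r(\g)$: the left-hand side by the Lemma above, and the right-hand side because $\Psi_{\g^*}$ is defined on cohomology by Theorem \ref{thom-iso} and $\Omega_{\pi^!\g}$ is invariant by \cite[Lemma 4.9]{ppt}. Here $r=\rk\g$, so $\alpha$ is a top-degree $\g$-form and $\langle\alpha,\Omega\rangle$ is a top-degree de Rham form on $M$. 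Hence I may compute with any closed representative $\alpha\in\Omega^r_\g$ and with convenient form representatives of the Thom class.

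I would begin by choosing a good representative of the Thom class. By its definition \eqref{def-thom}, $Th^\g(\g^*)=\rho^*_{\pi^!\g}Th(\g^*)$, so if $\tau\in\Omega^r_c(\g^*)$ is an ordinary de Rham Thom form for the vector bundle $\g^*\to M$, normalized so that $\int_{\g^*_x}\tau=1$ for every $x\in M$, then $\rho^*_{\pi^!\g}\tau$ represents $Th^\g(\g^*)$ and is compactly supported along the fibres of $\pi$. Likewise $\pi^*\alpha$ is represented by $p_2^*\alpha$, where $p_2:\pi^!\g\to\g$ is the projection. With these choices the integrand $\langle\pi^*\alpha\wedge Th^\g(\g^*),\Omega_{\pi^!\g}\rangle$ is an honest top-degree de Rham form on $\g^*$, compactly supported along the fibres of $\pi$.

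The main step is a Fubini argument. Since the integrand is fibrewise compactly supported, integration over $\g^*$ factors as integration along the fibres followed by integration over $M$, so it suffices to prove the pointwise identity of top-degree forms on $M$
\[
\pi_*\langle\pi^*\alpha\wedge Th^\g(\g^*),\Omega_{\pi^!\g}\rangle=\langle\alpha,\Omega\rangle,
\]
where $\pi_*$ is integration along the fibres. This is local on $M$, so I would verify it on a trivializing open set $U$ over which $\g|_U$, hence $\g^*|_U$, is trivial. There, exactly as in part $iii)$ of Theorem \ref{thom-iso}, one has the splitting $\pi^!\g|_U\cong pr_1^*\g\oplus pr_2^*T\R^r$ and $Th^\g(\g^*)$ is carried by the external product to $1\otimes Th^r$. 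Writing $\Theta=\sum_i\phi_i\wedge\theta^i$ in the induced coframe, where the $\theta^i$ are dual to the $\g$-directions and the $\phi_i$ dual to the vertical $\g^*$-directions, the form $p_2^*\alpha$ involves only the $\theta^i$ and the fibre part of $\rho^*_{\pi^!\g}\tau$ only the $\phi_i$; pairing against $\Omega_{\pi^!\g}=\Theta^r\otimes\pi^*\Omega$ then contracts these against one another, with $\Theta^r$ supplying both the duality pairing between $\bigwedge^r\g$ and $\bigwedge^r\g^*$ and the fibre de Rham volume, while $\pi^*\Omega$ supplies the base density. After integrating out the fibre the factor $\int_{\R^r}Th^r=1$ survives and the remaining factor is precisely $\langle\alpha,\Omega\rangle$.

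The hard part is this final fibrewise contraction. One must check that only the pure-fibre component of $\rho^*_{\pi^!\g}\tau$ contributes, since the mixed base-fibre correction terms present in any global Thom form cannot fill out all the fibre directions demanded by $\Theta^r$ and therefore drop out; and one must keep track of the signs, orientation conventions, and the combinatorial factors produced by $\Theta^r$ in order to confirm that the normalization yields $\langle\alpha,\Omega\rangle$ with coefficient exactly one, rather than up to a nonzero constant. Once the trivial case is established with the correct normalization, the identity of the two densities holds globally by locality, and integrating over the compact manifold $M$ gives the Proposition.
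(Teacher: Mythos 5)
Your proof is correct and follows essentially the same route as the paper: localize to a trivialization where $\pi^!\g|_U\cong pr_1^*\g\oplus pr_2^*T\R^r$, identify the Thom class with $1\otimes Th^r$, observe that pairing with $\Omega_{\pi^!\g}=\Theta^r\otimes\pi^*\Omega$ yields $\pi^*\left<\alpha,\Omega\right>\otimes Th^r$, and conclude by fibre integration using $\int_{\R^r}Th^r=1$. The points you flag as the ``hard part'' (only the pure-fibre component of the Thom form survives, since $\pi^*\alpha$ already saturates the $\bigwedge^r\g^*$ directions, plus the normalization check) are precisely what the paper's proof asserts without comment, so your write-up is if anything slightly more careful than the original.
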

\begin{proof}
We have seen in the proof of Theorem \ref{thom-iso} that in a local trivialization $\g^*|_U\cong U\times \R^r$, 
\[
\pi^!\g|_U\cong pr_1^*\g\oplus pr_2^*T\R^r,
\]
and that the Thom class $Th^\g(\g^*)$ restricts to the generator $Th^r\in H^r_c(\R^r)$. Since the volume form \eqref{volume-pull-back} is just the top exterior power of the canonical symplectic form on $pr_2^*T^*\R^2$, it follows that 
\[
\begin{split}
\left<\pi^*\alpha\wedge Th^\g(\g^*),\Omega_{\pi^!\g}\right>&=\left<\pi^*\alpha,\pi^*\Omega\right>\otimes Th^r\\
&=\pi^*\left<\alpha,\Omega\right>\otimes Th^r\in H^{\rm top}_{cv}(\g^*|_U)
\end{split}
\]
To perform the integral over $\g^*$, we first integrate over the fibers of the vector bundle $\g^*\to M$. Now, because
\[
\int_{\R^r} Th^r=1,
\]
the result is an integral over $M$, exactly equal to the left hand side of the asserted equality in the statement of the proposition. This completes the proof.
\end{proof}
When $\sfG$ is not unimodular, the density bundle should be included in the Lie algebroid cohomology
classes that one wants to integrate. With this the arguments above generalize easily and we state the 
final result without proof:
\begin{theorem}
Let $\g\to M$ be a Lie algebroid of rank $r$. For $\alpha\in \Omega^r_\g(L_\g)$, the integral
\[
\int_M\alpha
\]
is well-defined and depends only on the cohomology class in $H^r(\g;L_\g)$ determined by $\alpha$. 
The Thom class $Th^{\g}(\g^*)\in H^r_c(\pi^!\g)$ yields an isomorphism
\[
\Psi_{\g^*}:H^\bullet(\g;L_\g)\stackrel{\cong}{\longrightarrow} H^{\bullet+r}_c(\pi^!\g;L_{\pi^!\g}),
\quad \alpha \mapsto (\pi^*\alpha\otimes\Theta^r)\wedge Th^{\g}(\g^*),
\]
compatible with integration:
\[
\int_M\alpha=\int_{\g^*}\Psi_{\g^*}(\alpha).
\]
\end{theorem}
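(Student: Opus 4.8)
The plan is to carry out the three arguments underlying Proposition~\ref{Thom-int} and Theorem~\ref{thom-iso}(iii) with the density bundle $L_\g$ inserted as coefficients, the point being that $L_\g$ is exactly the twist for which integration is closed \emph{without} assuming unimodularity. The enabling observation is a chain of canonical identifications of determinant and density line bundles. Since $\rk\g=r$ and $\bigwedge^r\g^*\otimes\bigwedge^{\rm top}\g$ is canonically trivial via the pairing, one has $\bigwedge^r\g^*\otimes L_\g\cong\bigwedge^{\rm top}T^*M$; and since $(\pi^!\g)_\xi\cong\g_{\pi(\xi)}\oplus\g^*_{\pi(\xi)}$ has rank $2r$ while $T(\g^*)\cong\pi^*TM\oplus\pi^*\g^*$, the same bookkeeping yields a canonical isomorphism $L_{\pi^!\g}\cong\pi^*L_\g$. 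Regarded as a nonvanishing section of the determinant line $\bigwedge^{\rm top}(\pi^!\g)^*\cong\pi^*(\bigwedge^{\rm top}\g^*\otimes\bigwedge^{\rm top}\g)$ rather than as a top-degree cocycle, $\Theta^r$ is the concrete device realizing this: tensoring the $\pi^*L_\g$-valued class $\pi^*\alpha$ by $\Theta^r$ lands it in $L_{\pi^!\g}$-coefficients without changing form degree, which is why $\Psi_{\g^*}(\alpha)$ has degree $\deg\alpha+r$.

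First I would establish well-definedness of $\int_M$. Under $\bigwedge^r\g^*\otimes L_\g\cong\bigwedge^{\rm top}T^*M$ an element $\alpha\in\Omega^r_\g(L_\g)$ is canonically a de Rham top form on $M$, so $\int_M\alpha$ needs no choices. For invariance within the cohomology class I would invoke the Lie algebroid Stokes theorem of \cite[\S 5]{elw}: the canonical $\g$-representation on $L_\g$ is constructed precisely so that $\int_M d_\g\beta=0$ for all $\beta\in\Omega^{r-1}_\g(L_\g)$. Unlike the quoted Lemma, this genuinely $L_\g$-valued version requires no global invariant volume, which is exactly why it survives into the non-unimodular case; the same statement applied to the rank-$2r$ algebroid $\pi^!\g$ with its bundle $L_{\pi^!\g}$ furnishes the integration map $\int_{\g^*}$ on the right.

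Next, to see that $\Psi_{\g^*}$ is an isomorphism, I would check that $\Theta^r$ intertwines the pull-back representation on $\pi^*L_\g$ with the canonical representation on $L_{\pi^!\g}$, i.e.\ that the identification $L_{\pi^!\g}\cong\pi^*L_\g$ is flat; this is the analog of the invariance statement \cite[Lemma 4.9]{ppt} and uses that $\Theta$ is a symplectic $d_{\pi^!\g}$-cocycle. Granting this, $\Psi_{\g^*}$ becomes the map $\alpha\mapsto\pi^*\alpha\wedge Th^\g(\g^*)$ of Theorem~\ref{thom-iso}(iii) carried along the coefficient bundle $L_\g$, and I would rerun that proof verbatim: for a trivializing, contractible $U$ the splitting $\pi^!\g|_U\cong pr_1^*\g\oplus pr_2^*T\R^r$ together with the external product \eqref{ext-prdt} gives $H^\bullet(\g|_U;L_\g)\otimes H^\bullet_c(\R^r)\cong H^\bullet_c(\pi^!\g|_U;\pi^*L_\g)$, under which $Th^\g(\g^*)|_U$ corresponds to $1\otimes Th^r$, so $\Psi$ is an isomorphism locally; the Mayer--Vietoris diagram (now with $L_\g$-coefficients), the five-lemma, and induction over a good cover then give the general case.

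Finally, integration compatibility $\int_M\alpha=\int_{\g^*}\Psi_{\g^*}(\alpha)$ follows from a local computation identical to Proposition~\ref{Thom-int}: in a trivialization $\Psi_{\g^*}(\alpha)$ restricts to $\pi^*(\text{the top form of }\alpha)\otimes Th^r$, and integrating over the $\R^r$-fibers with $\int_{\R^r}Th^r=1$ returns $\int_M\alpha$, with the canonical density identifications of the first paragraph replacing the pairing against a global $\Omega$. I expect the main obstacle to be precisely this density/orientation bookkeeping: verifying that the single section $\Theta^r$ simultaneously realizes $L_{\pi^!\g}\cong\pi^*L_\g$, is flat for the relevant representations so that $\Psi_{\g^*}$ descends to cohomology, is compatible with the external product \eqref{ext-prdt}, and matches the two integration maps. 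Once these canonical identifications of line bundles are pinned down and shown to be $d$-compatible, the cohomological arguments are a routine transcription of the unimodular case.
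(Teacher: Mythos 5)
Your proposal is correct and takes exactly the route the paper intends: the paper states this theorem \emph{without proof}, remarking only that the unimodular arguments (the integration lemma via \cite[\S 5]{elw}, Theorem \ref{thom-iso}$(iii)$, and Proposition \ref{Thom-int}) ``generalize easily'' once the density bundle is inserted, and your write-up is a faithful execution of precisely that plan. In particular you correctly identify and resolve the one genuinely delicate point left implicit in the statement, namely that $\Theta^r$ must be read as a flat nonvanishing section of $\bigwedge^{\rm top}(\pi^!\g)^*$ realizing the canonical identification $L_{\pi^!\g}\cong\pi^*L_\g$ (the analogue of \cite[Lemma 4.9]{ppt}), rather than as a degree-$2r$ cochain, so that $\Psi_{\g^*}$ shifts degree by $r$ and the Mayer--Vietoris and local fiber-integration arguments carry over verbatim with $L_\g$-coefficients.
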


\subsection{Characteristic classes of Lie algebroids}
\label{char}
As usual, $\g\to M$ is a Lie algebroid with anchor $\rho_\g$.
The Euler class \eqref{euler} of a vector bundle introduced above is an example of a so-called {\em primary characteristic class} in Lie algebroid cohomology. These are characteristic classes of vector bundles $E\to M$ obtained by pull-back along the anchor map $\rho_\g:\g\to TM$ of the usual characteristic classes in cohomology. In the special case of a foliation, these characteristic classes 
are also known as {\em foliated characteristic classes}, described in \cite[Ch. V]{ms}. An alternative 
approach, c.f. \cite{crainic,fernandes}, to these primary characteristic classes is to apply the usual 
Chern--Weil theory to a {\em $\g$-connection} $\nabla$ on $E$ as in \eqref{connection}: such a 
connection has a curvature tensor $R(\nabla)\in\Omega^2_\g({\rm End}(E))$ and the main theorem of 
Chern--Weil then asserts that for any degree $k$ polynomial $P:M_{\rk(E)}(\C)\to\C$ invariant under 
the conjugacy action of $GL(\rk(E),\C)$, the Lie algebroid cochain
\[
P(R(\nabla))\in \Omega^{2k}_\g
\]
is well-defined, closed and its cohomology class in $H^{2k}(\g)$ is independent of the connection 
chosen. The equality of the two approaches, i.e., pulling back along the anchor and Chern--Weil for 
$\g$-connections, follows from the fact a connection on $E$ induces a $\g$-connection by the anchor map.

The most basic characteristic classes of complex vector bundles are of course the Lie algebroid {\em Chern classes} $c_i^\g(E)\in H^{2i}(\g)$ defined as
\[
\det(1+R(\nabla))=1+c^\g_1(\nabla)+c^\g_2(\nabla)+\ldots
\]
The {\em Chern character} is given by
\begin{equation}
\label{chern}
\begin{split}
{\rm ch}^\g(\nabla):&={\rm Tr}\left(e^{R(\nabla)}\right)
={\rm rk}(E)+c^\g_1(\nabla)+\frac{1}{2}(c^\g_1(\nabla)^2-2c^\g_2(\nabla))+\ldots
\end{split}
\end{equation}
Another example, appearing in the index theorem \ref{index-thm} is the {\em Todd class}:
\begin{equation}
\label{todd}
{\rm Td}^\g(\nabla):=\det\left(\frac{R(\nabla)}{1-e^{R(\nabla)}}\right)= 1+\frac{1}{2}c_1^\g(\nabla)+
\frac{1}{12}(c_2^\g(\nabla)+ c_1^\g(\nabla)^2)+\ldots
\end{equation}

\section{Examples of geometric operators}
\label{geom}
We now return to the index theorem on Lie groupoids. Using the theory of the previous section we can 
use the Thom isomorphism $\Psi_{\g^*}$ to rewrite the right hand side of the index Theorem \ref{index-thm} as an integral over $M$ instead of $\g^*$: for this we have to compute $\Psi_{\g^*}^{-1}$ of the Lie algebroid cohomology class in $H^\bullet_c(\pi^!\g)$ that appears in the integral on the right hand side. For this we
use Proposition \ref{Thom-int} which we rewrite as
\[
\int_{\g^*}\gamma=\int_M\Psi_{\g^*}^{-1}(\gamma),\quad \gamma\in H^{\rm top}_c(\pi^!\g).
\]
Now if $\gamma=\Psi_{\g^*}(\nu)=\pi^*\nu\wedge Th^\g(\g)$, for $\gamma\in H^{\bullet}_c(\pi^!\g)$ and $\nu\in H^\bullet(\g)$,  we have that 
\[
(i,\iota)^*\gamma=\nu\wedge e^\g(\g).
\]
Therefore $\nu=(i,\iota)^*\gamma\slash e^\g(\g)$, provided we can perform the division. For the index theorem, we thus find, using the fact that  $(i,\iota)^*{\rm ch}^{\pi^!\g}(\sigma(D))={\rm ch}^\g(E)$:
\begin{equation}
\label{index-alt}
{\rm ind}_{\alpha}(D)=\frac{1}{(2\pi \sqrt{-1})^k}\int_{M}\frac{\alpha\wedge{\rm Td}^\g(\g\otimes\C)\wedge {\rm ch}^\g(E)}{e^\g(\g)},
\end{equation}
provided we can make sense of the division by the Euler class. 
In this section we describe several natural geometric elliptic operators on Lie groupoids, following \cite{ln}, 
to which our result applies. We work out the index theorem for the Euler, signature and Dirac operator and obtain 
integral formulae for their indices, completely analogous to the topological index of these 
operators on compact manifolds, where de Rham cohomology has been replaced by 
the Lie algebroid cohomology of $\g$.
\subsection{The Euler operator}
The most natural differential operator on a manifold is of course the de Rham operator acting on sections of exterior powers of the dual of the tangent bundle. Its analogue for Lie algebroid is the Chevalley--Eilenberg operator \eqref{diff}, which can be viewed as an element
\[
d_\g\in\mathcal{U}\left(\g; \bigwedge \g^*\right).
\] 
As is the case for the de Rham operator, this Chevalley--Eilenberg operator is not elliptic. To obtain an elliptic operator, we fix a metric on $\g$, which defines, in the usual manner, a Hodge $*$-operator 
\[
*:\Omega^k_\g\to\Omega^{r-k}_\g.
\]
With the Hodge $*$, we can define the formal adjoint $\delta_\g:\Omega^{k+1}_\g\to\Omega^k_\g$ by the usual formula
\[
\delta_\g:=(-1)^{rk+r+1}*d_\g*.
\]
The Euler operator 
\[
D_\g:=d_\g+\delta_\g:\Omega^{ev}_\g\to\Omega^{odd}_\g
\]
is elliptic and the index Theorem \ref{index-thm} applies. To state the final result, we need the Euler 
class $e^\g(\g)\in H^\bullet(\g)$ introduced in equation \eqref{euler}. This class can be represented 
explicitly by a differential form using the {\em Pfaffian} of the curvature of the induced 
Levi--Civita connection $\nabla$, which is $\mathfrak{so}(r)$-valued:
\[
e^\g(\g)={\rm Pf}(R(\nabla))\in H^{2r}(\g).
\]
The Pfaffian is the invariant polynomial on $\mathfrak{so}(r)$ of degree $r$ satisfying ${\rm Pf}(A)^2=\det(A)$, so by Chern--Weil theory, the resulting cohomology class is independent of the metric chosen.
The final result is the following version of the Chern--Gauss--Bonnet theorem for Lie groupoids:
\begin{theorem}
\label{Gauss-Bonnet}
Let $\sfG\rightrightarrows M$ be a unimodular  Lie groupoid, equipped with a metric on its Lie algebroid. The index of the Euler operator $D_\g$ is given by
\[
{\rm Ind}_\Omega(D_\g)=\int_M \left<e^\g(\g),\Omega\right>.
\]
\end{theorem}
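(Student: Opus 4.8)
The plan is to specialize the rewritten index formula \eqref{index-alt} to the operator $D_\g$ and to the degree-zero class $\nu=\Omega$, and then to reduce the right hand side to the Euler class by the same characteristic-class computation that classically derives the Chern--Gauss--Bonnet theorem from Atiyah--Singer. All of the groupoid analysis is absorbed into Theorem \ref{index-thm} and formula \eqref{index-alt}; what remains is a purely local identity on $M$.

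First I would note that, since $\sfG$ is unimodular, the invariant volume form $\Omega$ represents a class in $H^0_{\rm diff}(\sfG;L_\g)=\Gamma^\infty_{\rm inv}(M;L_\g)$, so that its van Est image $\Phi_\g(\Omega)$ is simply $\Omega$ and the normalization prefactor $1/(2\pi\sqrt{-1})^k$ in Theorem \ref{index-thm} is trivial ($k=0$). Applying formula \eqref{index-alt} with the coefficient bundle $E=\bigwedge\g^*$ carrying its even/odd grading then gives
\[
{\rm Ind}_\Omega(D_\g)=\int_M\left\langle\frac{{\rm Td}^\g(\g\otimes\C)\wedge{\rm ch}^\g(\sigma(D_\g))}{e^\g(\g)},\,\Omega\right\rangle,
\]
where ${\rm ch}^\g(\sigma(D_\g))$ is the restriction $(i,\iota)^*{\rm ch}^{\pi^!\g}(\sigma(D_\g))$ of the symbol Chern character to the zero section. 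Everything now reduces to computing the top-degree part of the integrand.

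Next I would identify the symbol contribution. The principal symbol of $D_\g=d_\g+\delta_\g$ at a covector is, up to the usual scalar, exterior multiplication plus contraction on $\bigwedge\g^*$ with respect to the even/odd splitting, so its restriction to the zero section is the $K$-theoretic Euler class $\lambda_{-1}(\g^*\otimes\C)=\sum_i(-1)^i\bigwedge^i(\g^*\otimes\C)$, whence ${\rm ch}^\g(\sigma(D_\g))={\rm ch}^\g\big(\lambda_{-1}(\g^*\otimes\C)\big)$. Writing the Chern roots of $\g\otimes\C$ as $\pm y_1,\ldots,\pm y_m$ with $\rk\g=2m$, and applying the splitting principle to the Chern--Weil representatives of \eqref{chern} and \eqref{todd}, I obtain
\[
{\rm ch}^\g\big(\lambda_{-1}(\g^*\otimes\C)\big)=\prod_{j=1}^m(1-e^{y_j})(1-e^{-y_j}),\qquad {\rm Td}^\g(\g\otimes\C)=\prod_{j=1}^m\frac{y_j}{1-e^{-y_j}}\cdot\frac{-y_j}{1-e^{y_j}}.
\]
Multiplying these two expressions collapses the exponential factors and leaves $\prod_j(-y_j^2)=(-1)^m\prod_j y_j^2$, while $e^\g(\g)={\rm Pf}(R(\nabla))=\prod_j y_j$, so that ${\rm Td}^\g(\g\otimes\C)\wedge{\rm ch}^\g(\sigma(D_\g))=(-1)^m\,e^\g(\g)^2$ up to the powers of $2\pi\sqrt{-1}$ fixed by the conventions in \eqref{chern}--\eqref{todd}.

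The decisive point---and the one place where the formal division by the Euler class in \eqref{index-alt} must actually be justified---is that this numerator is genuinely divisible by $e^\g(\g)$: being $e^\g(\g)$ times $e^\g(\g)$, the quotient is exactly $e^\g(\g)$. Substituting back yields ${\rm Ind}_\Omega(D_\g)=\int_M\langle e^\g(\g),\Omega\rangle$, as claimed. The main obstacle is therefore not the Thom isomorphism or the groupoid index theorem but the bookkeeping of the universal identity $\frac{{\rm Td}\cdot{\rm ch}(\lambda_{-1})}{e}=e$: one must track the orientation sign $(-1)^m$ and the powers of $2\pi\sqrt{-1}$ so that they cancel against the $k=0$ normalization. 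When $\rk\g$ is odd, the Pfaffian and hence $e^\g(\g)$ vanish, and the Euler operator has vanishing index, consistently with the stated formula.
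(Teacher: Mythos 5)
Your proposal is correct and has the same skeleton as the paper's proof: both specialize the rewritten formula \eqref{index-alt} to $\nu=\Omega$ in degree zero, identify $(i,\iota)^*{\rm ch}^{\pi^!\g}(\sigma(D_\g))$ with ${\rm ch}^\g\bigl(\sum_{i\geq 0}(-1)^i\bigwedge^i\g^*\otimes\C\bigr)$, and reduce everything to the universal identity ${\rm ch}^\g(\lambda_{-1}(\g^*\otimes\C))\,{\rm Td}^\g(\g\otimes\C)\slash e^\g(\g)=e^\g(\g)$. Where you genuinely diverge is in how that identity is established. The paper does no computation at all: it cites the classical case $\g=TM$ from \cite[\S 4]{shanahan} and then transports the identity to Lie algebroid cohomology by observing that all classes involved are \emph{primary}, i.e.\ pull-backs along the anchor, so the commutative diagram \eqref{useful} yields the $\g$-version for free. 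You instead verify the identity by a Chern-roots computation on Chern--Weil representatives of $\g$-connections. This is self-contained, but two points need care. First, the splitting principle as a topological device (passage to a flag bundle) is not directly available in $H^\bullet(\g)$; the correct justification of your manipulation is that the identity is one of ${\rm ad}$-invariant formal power series on $\mathfrak{so}(2m)$, hence holds for the curvature of any $\g$-connection --- or else you fall back on exactly the paper's anchor-pull-back argument. Second, your residual sign $(-1)^m$ is an artifact of mixing conventions: with the paper's normalizations \eqref{chern}--\eqref{todd} the Levi--Civita curvature is real skew-symmetric, the roots of $\g\otimes\C$ are $\pm\sqrt{-1}\,y_j$ while ${\rm Pf}(R(\nabla))=\prod_j y_j$, so each pair of factors contributes $-(\sqrt{-1}\,y_j)(-\sqrt{-1}\,y_j)=y_j^2$ and the numerator collapses to $e^\g(\g)^2$ on the nose, with no sign to track; your choice of real roots $\pm y_j$ together with $e=\prod_j y_j$ is what produced the spurious $(-1)^m$. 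Finally, your closing remark on odd rank is fine as a consistency check, but note that \eqref{index-alt} is not literally applicable there, since $e^\g(\g)=0$ makes the division meaningless; the vanishing of the index in that case needs the classical symmetry argument, not the displayed formula.
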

\begin{proof}
Using \eqref{index-alt}, the statement follows from the equality
\[
\frac{{\rm ch}^\g\left(\sum_{i\geq 0}(-1)^i\bigwedge^i\g^*\otimes\C\right){\rm Td}^\g(\g\otimes\C)}{e^\g(\g)}=e^\g(\g),
\]
which holds in $H^{2r}(\g)$. In ordinary cohomology, i.e., when $\g=TM$, this equality is well-known, 
see e.g. \cite[\S 4]{shanahan}. But since all the characteristic classes are primary, and obtained by 
pull-back along the anchor of the ordinary characteristic classes, the commutative diagram 
\eqref{useful} shows that the identity above holds true in Lie algebroid cohomology as well.
This proves the theorem.
\end{proof} 

Next, choose a generic section $X$ of $\g$, and denote the zero set of $X$ in $M$ by $Z$. By translation, the section $X$ lifts to a $\sfG$-invariant section $\widetilde{X}$ of $T_t\sfG$, the fiberwise tangent bundle of $t:\sfG\to M$. Denote by $\widetilde{Z}$ to be the zero set of $\widetilde{X}$, which is a $\sfG$ invariant and $\sfG$-cocompact subset of $\sfG$.

Let $e:=t|_{\widetilde{Z}}:\widetilde{Z}\to M$ be the restriction map. The differential of the section map $\widetilde{X}:\sfG\to T_t\sfG$ induces a $\sfG$-equivariant isomorphism from the normal bundle $N$ of $\widetilde{Z}$ in $\sfG$ and $T_t\sfG$, the fiberwise tangent bundle. By the $\sfG$-equivariant isomorphisms
\[
N\oplus T\widetilde{Z}=T\sfG|_{\widetilde{Z}}=(\tau\oplus T_t\sfG)|_{\widetilde{Z}}
\]
for the transverse bundle $\tau$ of $T_t\sfG$ in $T\sfG$, we can equip the map $e$ with a $\sfG$-equivariant $K$-orientation, and therefore $e_!(\widetilde{Z})$ defines, after composing with the ``quantization map'' $K(\g^*)\to K(C^*_r(\sfG))$,  an element in  $K(C_r^*(\sfG))$.  

\begin{proposition}
The index of  the Euler operator $D_\g$ is equal to $e_!(\widetilde{Z}) \in K(C^*_r(\sfG))$. 
\end{proposition}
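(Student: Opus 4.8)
The plan is to reduce the statement to an identity of $K$-theory classes in $K(\g^*)$ and then quantize. By the very construction of the analytic index, the class $\mathrm{ind}(D_\g)\in K(C^*_r(\sfG))$ is the image under the quantization map $K(\g^*)\to K(C^*_r(\sfG))$ of the principal symbol class $[\sigma(D_\g)]\in K(\g^*)$; this class exists precisely because ellipticity makes $\sigma(D_\g)$ invertible off the zero section of $\g^*$. Since, as explained before the statement, $e_!(\widetilde Z)$ is by definition the image under the same quantization map of the Gysin pushforward of $[\widetilde Z]$ along the $K$-oriented map $e$, it suffices to prove the equality $[\sigma(D_\g)]=e_!([\widetilde Z])$ already in $K(\g^*)$. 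Thus the whole problem becomes a $\sfG$-equivariant computation of the symbol class of the Euler operator, and the quantization map is applied only at the very end.

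First I would identify the symbol. The principal symbol of $D_\g=d_\g+\delta_\g$ at a covector $\xi\in\g^*$ is Clifford multiplication $c(\xi)=\xi\wedge(\cdot)-\iota_{\xi^\sharp}$ on $\bigwedge^\bullet\g^*$, graded by form parity. This is the standard de Rham symbol, and under the Thom isomorphism $K(M)\cong K(\g^*)$ determined by the fibrewise almost complex structure on $\g\oplus\g^*$ (equivalently by the symplectic form $\Theta$ of \eqref{sympl-form}) it corresponds to the $K$-theoretic Euler class of $\g$. The next step is to localize this Euler class by means of the generic section. Passing to the $t$-fibres of $\sfG$, where the lift $\widetilde X$ of $X$ restricts to an honest vector field, I would deform the radial Bott homotopy centred on the zero section to one centred on the graph of $-\widetilde X^\flat$, using the metric identification $\g\cong\g^*$. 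Genericity of $X$, hence of $\widetilde X$, yields exactly the nondegeneracy isomorphism $d\widetilde X\colon N\xrightarrow{\cong}T_t\sfG$ recorded above, and this forces the deformed symbol to be invertible outside an arbitrarily small $\sfG$-invariant tubular neighborhood of $\widetilde Z$ inside the zero covectors. The deformation is $\sfG$-equivariant and supported $\sfG$-cocompactly, so it lives entirely within $K(\g^*)$.

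It then remains to recognise the localized class as the Gysin pushforward $e_!([\widetilde Z])$. On the tubular neighborhood the deformed symbol is, by excision and the Thom construction, precisely the Thom class of the normal bundle $N$, and the identification $N\cong T_t\sfG$ together with the splitting $N\oplus T\widetilde Z=(\tau\oplus T_t\sfG)|_{\widetilde Z}$ is exactly the $K$-orientation data defining $e_!$. Matching these gives $[\sigma(D_\g)]=e_!([\widetilde Z])$ in $K(\g^*)$, and quantizing finishes the proof. The main obstacle I anticipate is this last matching step carried out equivariantly: one must verify that the $K$-orientation on $e$ built into the Clifford symbol of the Euler operator agrees, signs included, with the one induced by $d\widetilde X$, and that the Atiyah--Bott style localization of the Euler class is valid for the non-compact but $\sfG$-cocompact zero set $\widetilde Z$ inside the invariant pseudodifferential framework. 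In essence this is the groupoid analogue of the Poincar\'e--Hopf localization $\chi(M)=\sum_{p\in Z}\mathrm{ind}_p(X)$, and the careful bookkeeping of orientations and supports in the equivariant, fibrewise setting is where the real work lies.
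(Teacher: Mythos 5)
Your proposal is correct and takes essentially the same route as the paper: the paper's entire proof is the one-line remark that the statement is a line-by-line $\sfG$-equivariant generalization of Connes--Skandalis' proof of their Corollary 4.17, and your argument --- identify ${\rm ind}(D_\g)$ with the quantization of the symbol class $[\sigma(D_\g)]\in K(\g^*)$, deform the Clifford symbol along the lift $\widetilde{X}$ to localize at $\widetilde{Z}$, and match the resulting Thom class of $N\cong T_t\sfG$ with the $K$-orientation defining $e_!$ --- is precisely that Connes--Skandalis localization argument transported to the invariant calculus on the $t$-fibers, using exactly the data ($\widetilde{X}$, $\widetilde{Z}$, $N\oplus T\widetilde{Z}=(\tau\oplus T_t\sfG)|_{\widetilde{Z}}$, quantization map) the paper sets up before the proposition. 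The equivariant bookkeeping of orientations and cocompact supports that you flag as the remaining work is exactly what the paper delegates to the phrase ``line by line,'' so your outline matches the intended proof.
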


\begin{proof}This is a line by line $\sfG$-equivariant generalization of the Connes-Skandalis proof of \cite[Corollary 4.17]{cs}. 
\end{proof}

When $\sfG$ is unimodular, the trace $\tau_\Omega$ on $\mathcal {A}_{\sfG}$ extends to a trace on $C^*_r(\sfG)$, and therefore we obtain the following formula
\[
{\rm Ind}_\Omega(D_\g)=\int_M \frac{{\rm ch}^\g(e_!(\widetilde{Z})){\rm Td}^\g(\g\otimes\C)}{e^\g(\g)},
\]
under the assumption that the division in the integrand on the right hand side can be made sense of. 
A careful understanding of the right hand side should lead to a generalization of the Poincar\'e--Hopf 
index theorem for groupoids, c.f. \cite{connes-foliations,be}.

\subsection{The signature operator}
We now assume that the Lie algebroid $\g\to M$ of $\sfG$ has even rank: $r=2p$.
In this case there exists an involution $\tau$ on $\bigoplus_{i\geq 0}\bigwedge^i \g^*$ given by
\[
\tau=(-1)^p*.
\]
It satisfies $\tau^2=1$ and induces a decomposition $\Omega_\g=\Omega^+_\g\oplus\Omega^-_\g$. 
Furthermore, $\tau D_\g=-D_\g\tau$, so that we can define the signature operator
\[
D_\g^{sign}:\Omega_\g^+\to\Omega_\g^-.
\]
Just as for the Euler operator, we can view the signature operator as an elliptic element 
\[
D_\g^{sign}\in\calU\left(\g; \bigwedge\g^*\right),
\]
so that the index Theorem \ref{index-thm} applies. To state the result we need the analogue of the {\em $L$-genus} in Lie algebroid cohomology:
\[
L^\g(\nabla):=\det\left(\frac{R(\nabla)}{\tanh(R(\nabla))}\right)=1+\frac{1}{3}p_1^\g(\nabla)+\frac{1}{45}(7p_2^\g(\nabla)-p_1^\g(\nabla)^2)+\ldots
\]
\begin{theorem}
Let $\sfG\rightrightarrows M$ be a Lie groupoid equipped with a metric on its Lie algebroid. Then, for $\nu\in H^{2k}(\g;L_\g)$, the
higher index of the signature operator is given by
\begin{equation}
\label{eq:sign}
{\rm Ind}_\nu(D^{sign}_\g)=\frac{1}{(2\pi\sqrt{-1})^k}\int_M\Phi_\g(\nu)\wedge L^\g(\g).
\end{equation} 
\end{theorem}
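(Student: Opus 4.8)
The plan is to run the same argument as in the proof of Theorem~\ref{Gauss-Bonnet}, replacing the de Rham symbol by that of the signature operator and the Euler-class identity by the classical signature computation. Starting from Theorem~\ref{index-thm} and applying the Thom-isomorphism reduction \eqref{index-alt} to $D=D^{sign}_\g$, with the van Est image $\Phi_\g(\nu)$ in the role of $\alpha$, the higher index takes the form
\[
{\rm Ind}_\nu(D^{sign}_\g)=\frac{1}{(2\pi\sqrt{-1})^k}\int_M\frac{\Phi_\g(\nu)\wedge{\rm ch}^\g(E^{sign})\wedge{\rm Td}^\g(\g\otimes\C)}{e^\g(\g)},
\]
where $E^{sign}$ is the virtual coefficient bundle determined by $(i,\iota)^*{\rm ch}^{\pi^!\g}(\sigma(D^{sign}_\g))={\rm ch}^\g(E^{sign})$. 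The first step is therefore to identify $E^{sign}$. Because the grading $\Omega_\g=\Omega^+_\g\oplus\Omega^-_\g$ comes from the involution $\tau=(-1)^p*$ rather than from form degree, the restriction of the symbol class to the zero section is the $K$-theoretic difference of the $(\pm 1)$-eigenbundles of $\tau$ acting on $\bigwedge(\g^*\otimes\C)$; this is the standard fact about the symbol of the signature operator, and it must be checked at the Lie algebroid level.

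Given this, the entire theorem reduces to the single characteristic-class identity
\[
\frac{{\rm ch}^\g(E^{sign})\wedge{\rm Td}^\g(\g\otimes\C)}{e^\g(\g)}=L^\g(\g)\quad\text{in}\ H^\bullet(\g),
\]
after which substitution into the displayed formula yields \eqref{eq:sign} verbatim. To prove this identity I would argue exactly as in Theorem~\ref{Gauss-Bonnet}: all factors are primary classes, pulled back along the anchor $\rho_\g$ from ordinary characteristic classes of the underlying oriented rank-$2p$ real bundle $\g\to M$, so by the naturality square \eqref{useful} it suffices to check the identity in ordinary de Rham cohomology. There it is the computation underlying Hirzebruch's signature theorem: with formal Pontryagin roots $\pm x_1,\ldots,\pm x_p$ one has ${\rm ch}(E^{sign})=\prod_{i=1}^p(e^{x_i}-e^{-x_i})$, ${\rm Td}(\g\otimes\C)=\prod_{i=1}^p x_i^2e^{x_i}/(e^{x_i}-1)^2$ and $e(\g)=\prod_{i=1}^p x_i$, and the quotient collapses factor by factor to $\prod_{i=1}^p x_i\coth(x_i/2)$, which is $L^\g(\g)$ once the standard halving of roots (absorbed into the $\det(R/\tanh R)$ convention) and the $(2\pi\sqrt{-1})^{-k}$ normalization are taken into account.

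The step I expect to be the main obstacle is the one flagged throughout \S\ref{geom}: giving meaning to the division by the Euler class $e^\g(\g)$. The essential point is that ${\rm ch}(E^{sign})$ has leading term $\prod_i(e^{x_i}-e^{-x_i})\sim 2^p\prod_i x_i=2^p\,e(\g)$, so that the quotient ${\rm ch}(E^{sign})/e^\g(\g)$ is not a formal fraction but a genuine power series in the Pontryagin classes, hence a well-defined class in $H^\bullet(\g)$; only with this observation does the integrand above acquire meaning. Once the cancellation is justified in de Rham cohomology by the splitting principle, functoriality of $\rho_\g^*$ transports the resulting honest identity to Lie algebroid cohomology. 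Throughout, the van Est factor $\Phi_\g(\nu)$ is carried along passively, exactly as $\alpha$ is in \eqref{index-alt}, and the integration over the compact base is the twisted integration against $L_\g$ set up in \S\ref{integration}.
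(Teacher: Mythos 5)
Your proposal is correct and follows essentially the same route as the paper's proof: apply the reduction \eqref{index-alt} to $D^{sign}_\g$, identify the restricted symbol class as $\bigwedge^+\g^*-\bigwedge^-\g^*$, reduce everything to the identity ${\rm ch}^\g\bigl(\bigwedge^+\g^*-\bigwedge^-\g^*\bigr){\rm Td}^\g(\g\otimes\C)\slash e^\g(\g)=L^\g(\g)$, and transport it from de Rham to Lie algebroid cohomology via the anchor and the naturality square \eqref{useful}. The only difference is one of detail: where the paper simply cites \cite[\S 6]{shanahan} for the de Rham identity, you spell out the splitting-principle computation with Pontryagin roots and explicitly justify the division by $e^\g(\g)$ via the leading term $2^p\prod_i x_i$, which is a useful (and correct) elaboration of the same argument.
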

\begin{proof}
The strategy of the proof is the same as that of Theorem \ref{Gauss-Bonnet}, this time the relevant 
identity is:
\[
\frac{{\rm ch}^\g\left(\bigwedge^+\g^*-\bigwedge^-\g^*\right){\rm Td}^\g(\g\otimes\C)}{e^\g(\g)}=L^\g(\g),
\]
valid in $H^{ev}(\g)$. For the proof of this identity in de Rham cohomology, see \cite[\S 6]{shanahan}. The general case follows from \eqref{useful}.
\end{proof} 
For the monodromy groupoid of a foliation, and coupled to a foliated vector bundle, the index formula on the right hand side of equation (\ref{eq:sign}) gives some of the ``higher signatures'' of the foliation, which are the central objects of study in the Novikov conjecture for foliations, c.f.\ \cite{bc,hurder,bh}. In general,  the right hand side of equation (\ref{eq:sign}) defines higher signatures for a general Lie groupoid. The homotopy invariance property of these numbers will be studied in a future publication.

\subsection{The Dirac operator}
Associated to a metric $\left<~,~\right>$ on $\g\to M$ there is a unique torsion free $\g$-connection on $\g$ satisfying
\[
L_{\rho(X)}\left<Y,Z\right>=\left<\nabla_XY,Z\right>+\left<Y,\nabla_XZ\right>,\quad\mbox{for all}~X,Y,Z\in\Gamma^\infty(M;\g).
\]
It is called the Levi--Civita connection associated to the metric. Alternatively, by pull-back, the metric defines a $\sfG$-invariant Riemannian metric on $T_t\sfG$ along the $t$-fibers of $
\sfG$. This Riemannian metric induces a fiberwise Levi--Civita connection $\nabla$ which identifies with the pull-back of $\nabla$. Therefore, we can view $\nabla_X$ as an element in $\calU(\g;\g)$.

Let ${\rm Cliff}(\g)$ be the Clifford algebra bundle over $M$ associated to the pair $(\g,\left<~,~\right>)$. 
Let $S$ be a bundle of modules over ${\rm Cliff}(\g)$, with Clifford multiplication written as $s\mapsto c(X)s$, for $s\in\Gamma^\infty(M;S)$ and $X\in \Gamma^\infty(M;\g)$.
Assume that $S$ is equipped with a metric $\left<~,~\right>$ and a compatible $\g$-connection $\nabla^S$ such that 
\begin{itemize}
\item[$i)$] Clifford multiplication is skew-symmetric, i.e., 
\[
\left<c(X)s_1,s_2\right>+\left<s_1,c(X)s_2\right>=0,~\quad X\in \Gamma^\infty(M;\g),~s_1,s_2\in\Gamma^\infty(M;S),
\]
\item[$ii)$] the connection is compatible with the Levi--Civita connection:
\[
\nabla^S_X(c(Y)s)=c(\nabla_XY)s+c(Y)\nabla^S_Xs,\quad X,Y\in\Gamma^\infty(M;\g),~s\in\Gamma^\infty(M;S).
\]
\end{itemize}
The {\em Dirac operator} associated to these data is defined as
\[
\slashed{D}_S:=c\circ\nabla^S:\Gamma^\infty(M;S)\to \Gamma^\infty(M;S),
\]
where we have used the metric to identify $\g\cong \g^*$. In a local orthonormal frame $e_\alpha,~\alpha=1,\ldots,r$ of $\g$, this operator is written as
\begin{equation}
\label{local}
\slashed{D}_Ss=\sum_{\alpha=1}^rc(e_\alpha)\nabla_\alpha s,\quad s\in\Gamma^\infty(M;S).
\end{equation}
With this local expression, one easily proves the analogue of the Weitzenbock formula:
\begin{equation}\label{eq:wei}
\slashed{D}^2_S=(\nabla^S)^*\nabla^S+\sum_{\alpha<\beta}c(e_\alpha) c(e_\beta) R(\nabla^S)_{\alpha\beta},
\end{equation}
where $R(\nabla^S)_{\alpha\beta}$ denote the components of the $\g$-curvature of $\nabla^S$.
For the spinor bundle $\mathscr{S}$, defined below, this term is exactly $1/4$ times the scalar curvature
of $\nabla^\mathscr{S}$, and the analogue of the formula above is known as Lichnerowicz formula.

We now assume that the Lie algebroid $\g$ is {\em spin}: it is orientable and its structure group $SO(r)$ 
can be lifted to the double cover $Spin(r)$. When the rank $r$  of $\g$ is even, there is a unique Clifford
module $\mathscr{S}$, called the {\em spinor bundle}, whose fiber $\mathscr{S}_x$ at each point $x\in M$ equals the unique 
irreducible representation of ${\rm Cliff}(\g_x)$. It carries a unique metric compatible with the Levi--Civita  
connection satisfying the properties above, and we write $\slashed{D}$ for the associated Dirac 
operator. Given an arbitrary vector bundle $E$ over $M$ with connection $\nabla^E$, we can consider 
the ``twisted Dirac operator'' $\slashed{D}_E$ by using the Clifford bundle $E\otimes\mathscr{S}$ 
equipped with the connection 
$\nabla^{E\otimes\mathscr{S}}=\nabla^E\otimes 1+1\otimes\nabla^\mathscr{S}$.

As for the previous geometric operators, by pulling back to $\sfG$ using the target map, we can 
interpret this Dirac operator as an element
\[
\slashed{D}_E\in\mathcal{U}(\g;E\otimes\mathscr{S}).
\]
From the local expression \eqref{local} we read off that its symbol is given by
\[
\sigma_\xi(\slashed{D}_E)=i\xi,\quad \xi\in\g^*\cong\g,
\]
and therefore the operator is elliptic. Since the rank $r$ is even, the spinor bundle splits into a direct 
sum $\mathscr{S}=\mathscr{S}^+\oplus\mathscr{S}^-$, and the Dirac operator decomposes as
\[
\slashed{D}_E=\left(\begin{matrix}0&\slashed{D}^-_E\\ \slashed{D}_E^+&0\end{matrix}\right).
\]
Since $\slashed{D}_E$ is selfadjoint, its index class ${\rm Ind}(\slashed{D}_E)\in K_0(\calA_\sfG)$ is 
zero. However, using $\slashed{D}^+_E$ instead, we get an interesting class, and can apply Theorem
\ref{index-thm}. This leads to the following:
\begin{theorem} 
Let $\sfG\rightrightarrows M$ be a Lie algebroid with Lie algebroid $\g\to M$ of even rank which is spin. 
Assume $E\to M$ is vector bundle and $\nu\in H^{2k}_{\rm diff}(\sfG;L_\g)$. Then the following identity holds true;
\[
{\rm ind}_\nu(\slashed{D}^+_E)=\int_M\Phi_\g(\nu)\wedge\hat{A}^\g(\g)\wedge{\rm ch}^\g(E).
\]
\end{theorem}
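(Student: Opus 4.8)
The plan is to follow verbatim the strategy of the proofs of Theorem \ref{Gauss-Bonnet} and the signature theorem: reduce the statement to a single characteristic-class identity in $H^{\rm ev}(\g)$, check that identity in ordinary de Rham cohomology by the splitting principle, and then transport it to Lie algebroid cohomology using that every class involved is primary. First I would feed $\slashed{D}^+_E$, regarded as an elliptic element of $\calU(\g;E\otimes\mathscr{S})$ with the grading $\mathscr{S}=\mathscr{S}^+\oplus\mathscr{S}^-$, into the rewritten index formula \eqref{index-alt}. The only operator-dependent input there is the restriction of the symbol's Chern character to the zero section. Since $\sigma(\slashed{D}^+_E)$ is the bundle map $\pi^*(E\otimes\mathscr{S}^+)\to\pi^*(E\otimes\mathscr{S}^-)$ which degenerates along the zero section, pulling back along $(i,\iota)$ gives
\[
(i,\iota)^*{\rm ch}^{\pi^!\g}(\sigma(\slashed{D}^+_E))={\rm ch}^\g(E\otimes\mathscr{S}^+)-{\rm ch}^\g(E\otimes\mathscr{S}^-)={\rm ch}^\g(E)\cdot{\rm ch}^\g(\mathscr{S}^+-\mathscr{S}^-),
\]
by multiplicativity of the Chern character on tensor products. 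Inserting this into \eqref{index-alt} reduces the theorem to the identity
\[
\frac{{\rm ch}^\g(\mathscr{S}^+-\mathscr{S}^-)\wedge{\rm Td}^\g(\g\otimes\C)}{e^\g(\g)}=\hat{A}^\g(\g)\quad\text{in }H^{\rm ev}(\g),
\]
after which the spectator factors ${\rm ch}^\g(E)$ and $\Phi_\g(\nu)$ come along unchanged.

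The core step is to establish this identity, and I would do it first in de Rham cohomology, i.e.\ for $\g=TM$ over a \emph{spin} manifold, via the splitting principle. Writing the Chern roots of $\g\otimes\C$ as $\pm x_1,\ldots,\pm x_p$ with $r=2p$, one has ${\rm ch}(\mathscr{S}^+-\mathscr{S}^-)=\prod_{i}(e^{x_i/2}-e^{-x_i/2})$, while $e(\g)=\prod_i x_i$ and ${\rm Td}(\g\otimes\C)=\prod_i\frac{x_i}{1-e^{-x_i}}\cdot\frac{-x_i}{1-e^{x_i}}$. Using the factorization $(1-e^{-x_i})(1-e^{x_i})=-(e^{x_i/2}-e^{-x_i/2})^2$, each factor of the left-hand side collapses to $\frac{x_i}{e^{x_i/2}-e^{-x_i/2}}$, whose product is exactly the $\hat A$-genus $\prod_i\frac{x_i/2}{\sinh(x_i/2)}$. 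Note in particular that the numerator is divisible by $e(\g)$ already at the level of these universal expressions, so the quotient is a genuine cohomology class and the division in \eqref{index-alt} presents no difficulty for this operator.

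Finally I would transport the identity to Lie algebroid cohomology. All the ingredients — ${\rm ch}^\g$, ${\rm Td}^\g$, $e^\g(\g)$ and $\hat A^\g(\g)$ — are primary characteristic classes, namely the Chern--Weil forms of $\g$-connections induced from ordinary connections along the anchor $\rho_\g:\g\to TM$; equivalently they are the pull-backs of the ordinary classes under the Lie algebroid morphism $(\mathrm{id},\rho_\g):\g\to TM$. Hence, exactly as in the proofs of Theorem \ref{Gauss-Bonnet} and of the signature theorem, the commutative diagram \eqref{useful} carries the de Rham identity verbatim into $H^{\rm ev}(\g)$. Combining this with \eqref{index-alt}, and reinstating the factors ${\rm ch}^\g(E)$ and $\Phi_\g(\nu)$ together with the normalization constant handled precisely as in the signature theorem, yields the asserted formula. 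The only genuinely delicate point is the division by the Euler class that \eqref{index-alt} builds in; the Chern-root computation above is what certifies that for the Dirac symbol this division is legitimate, so that no regularization is required.
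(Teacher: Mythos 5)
Your proposal is correct and follows essentially the same route the paper intends: the paper states this theorem without a separate proof, relying on the exact pattern of Theorem \ref{Gauss-Bonnet} and the signature theorem, namely feeding the operator into \eqref{index-alt}, reducing to the de Rham identity ${\rm ch}(\mathscr{S}^+-\mathscr{S}^-)\,{\rm Td}(\g\otimes\C)/e(\g)=\hat{A}(\g)$ (as in \cite[\S 6]{shanahan}), and transporting it to $H^{\rm ev}(\g)$ via the diagram \eqref{useful} because all classes are primary. Your splitting-principle computation verifying the identity (and certifying that the division by $e^\g(\g)$ is legitimate for the Dirac symbol) is accurate, and you correctly reinstate the $(2\pi\sqrt{-1})^{-k}$ normalization that the paper's statement of the theorem in fact omits.
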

In the theorem above, the Lie algebroid $\hat{A}^\g$-class is defined as usual by
\[
\hat{A}^\g(\nabla):=\det\left(\frac{R(\nabla)\slash 2}{\sinh(R(\nabla)\slash 2)}\right)=1-\frac{1}{24}p_1^\g(\nabla)+\frac{1}{5760}(7p_1^\g(\nabla)^2-4p_2^\g(\nabla)) +\ldots
\]
For varying vector bundles, the map $E\mapsto {\rm ind}_\nu(\slashed{D}^+_E)$ is easily seen to
descend to an index map on topological $K$-theory;
\[
K^0_{\rm top}(M)\to\C.
\]
There is a canonical forgetful map $R(\sfG)\to K^0_{\rm top}(M)$, where $R(\sfG)$ is the representation ring
of $\sfG$, the Grothendieck group completion of the abelian semigroup of isomorphism classes of
representations of $\sfG$. The following corollary shows that the index map only detects the rank
of the representation:
\begin{corollary}\label{cor:dirac}
Let $E\in {\rm Rep}(G)$, and $\nu\in H^{2k}_{\rm diff}(\sfG;L)$. Then 
\[
{\rm ind}_\nu(\slashed{D}_E)=\frac{{\rm rk}(E)}{(2\pi\sqrt{-1})^k}\int_M\Phi(\nu)\wedge\hat{A}^\g(\g).
\]
\end{corollary}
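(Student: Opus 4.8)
The plan is to obtain this as a direct specialization of the index formula for the twisted Dirac operator in the preceding theorem, the point being that for a genuine $\sfG$-representation the Lie algebroid Chern character reduces to the rank. First I would use the fact, recalled in \S\ref{Thom}, that a representation $E$ of the Lie groupoid $\sfG$ integrates to a representation of its Lie algebroid $\g$; concretely, the $\sfG$-action endows $E$ with a $\g$-connection $\nabla^E$ whose curvature $R(\nabla^E)\in\Omega^2_\g({\rm End}(E))$ vanishes identically, this being precisely the condition that $\nabla^E$ define a representation of $\g$.

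Next I would invoke the Chern--Weil description of the primary characteristic classes set up in \S\ref{char}: the class ${\rm ch}^\g(E)\in H^\bullet(\g)$ is independent of the $\g$-connection used to compute it, so it may be evaluated on the flat connection $\nabla^E$ coming from the groupoid action. From the definition \eqref{chern},
\[
{\rm ch}^\g(\nabla^E)={\rm Tr}\left(e^{R(\nabla^E)}\right)={\rm Tr}\left({\rm id}_E\right)={\rm rk}(E),
\]
because $R(\nabla^E)=0$ forces $e^{R(\nabla^E)}={\rm id}_E$. Equivalently, every higher Chern class $c_i^\g(\nabla^E)$ with $i\geq 1$ vanishes, so the Chern character collapses to the scalar ${\rm rk}(E)\in H^0(\g)$.

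Finally I would substitute ${\rm ch}^\g(E)={\rm rk}(E)$ into the index formula of the preceding theorem. Since ${\rm rk}(E)$ lies in $H^0(\g)$ it is a constant and factors out of the integral, yielding
\[
{\rm ind}_\nu(\slashed{D}_E)=\frac{{\rm rk}(E)}{(2\pi\sqrt{-1})^k}\int_M\Phi(\nu)\wedge\hat{A}^\g(\g),
\]
as claimed. The argument is essentially immediate once the preceding theorem is in hand; the only step warranting attention is the identification of the $\sfG$-equivariant structure on $E$ with a curvature-free $\g$-connection, which is what allows Chern--Weil theory to be applied to a flat representative. I do not expect any genuine obstacle here.
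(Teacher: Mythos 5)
Your proposal is correct and matches the paper's proof: the paper likewise deduces ${\rm ch}^\g(E)={\rm rk}(E)\in H^0(\g)$ from the Chern--Weil construction of \S\ref{char} (citing Crainic for the vanishing of ${\rm ch}^\g_{2k}(E)$ for $k\geq 1$) and then substitutes into the twisted Dirac index theorem. You have merely made explicit the step the paper leaves implicit, namely that a $\sfG$-representation induces a flat $\g$-connection on $E$, which is exactly the fact recalled in \S\ref{Thom} of the paper.
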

\begin{proof}
As observed in \cite{crainic}, for $E\in {\rm Rep}(\sfG)$, the Chern character in Lie algebroid 
cohomology is trivial: ${\rm ch}^\g_{2k}(E)=0$ in $H^{2k}(\g)$, for $k\geq 1$. This is immediately 
clear from the Chern--Weil construction of \S \ref{char}.
Therefore, ${\rm ch}^\g(E)={\rm rk}(E)\in H^0(\g)$ and the result follows.
\end{proof}

Finally, let us assume that $M$ is compact and that the scalar curvature of $\nabla^\mathscr{S}$ is positive at every point of $M$. We observe from the Weitzenb\"ock--Lichnerowicz formula (\ref{eq:wei}) that under these assumptions, the operator $\slashed{D}^2_\mathscr{S}$ is positive and therefore $\slashed{D}_\mathscr{S}$ has trivial kernel and cokernel over every $t$-fiber of $t:\sfG\to M$. As $M$ is compact, a standard operator algebra argument \cite{connes:fund} shows that the index ${\rm ind}(\slashed{D}_\mathscr{S})\in K_0(C^*_r(\sfG))$ is zero. When $\sfG$ is furthermore unimodular, the triviality of ${\rm ind}(\slashed {D}_\mathscr{S})\in K(C^*_r(\sfG))$ implies that 
\[
\tau_\Omega({\rm ind}(\slashed{D}_\mathscr{S}))=0. 
\]
Combing this with Corollary \ref{cor:dirac}, we conclude that 
\begin{corollary}
Assume $M$ is compact and $\sfG\rightrightarrows M$ is unimodular. If the scalar curvature of $\nabla^\mathscr{S}$ is  positive at every point of $M$, 
\[
\int_M\left<\hat{A}^\g(\g),\Omega\right>=0. 
\]
\end{corollary}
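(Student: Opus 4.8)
The plan is to specialize Corollary~\ref{cor:dirac} to the trivial one-dimensional representation $E=M\times\C$ and to the degree-zero class $\nu=\Omega\in H^0_{\rm diff}(\sfG;L_\g)=\Gamma^\infty_{\rm inv}(M;L_\g)$, and then to argue that the analytic index of the untwisted Dirac operator vanishes under the positive scalar curvature hypothesis. For the trivial bundle the twisted operator $\slashed{D}_E$ is just $\slashed{D}_\mathscr{S}$, and since $k=0$ both the prefactor $(2\pi\sqrt{-1})^{-k}$ and the van~Est map $\Phi_\g$ act trivially, so the right-hand side of Corollary~\ref{cor:dirac} collapses to $\int_M\langle\hat{A}^\g(\g),\Omega\rangle$, which is exactly the quantity we must show is zero. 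It therefore suffices to prove that ${\rm ind}_\Omega(\slashed{D}_\mathscr{S})=0$.

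First I would record the analytic input. By the Weitzenb\"ock--Lichnerowicz formula~(\ref{eq:wei}), for the spinor bundle the curvature term equals $\tfrac14$ times the scalar curvature $\kappa$ of $\nabla^\mathscr{S}$, so that $\slashed{D}^2_\mathscr{S}=(\nabla^\mathscr{S})^*\nabla^\mathscr{S}+\tfrac14\,\kappa$ with $\kappa>0$ everywhere on the compact base $M$. Pulling back along the target map, this identity holds fiberwise along $t:\sfG\to M$, and positivity of $\kappa$ forces $\slashed{D}^2_\mathscr{S}$ to be bounded below by a positive constant on each $t$-fiber. In particular the chiral halves $\slashed{D}^\pm_\mathscr{S}$ are invertible fiberwise, so that $\slashed{D}^+_\mathscr{S}$ has trivial kernel and cokernel along every fiber of $t$. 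A standard operator-algebra argument then upgrades this fiberwise invertibility to a genuine spectral gap, whence the index class vanishes: ${\rm ind}(\slashed{D}_\mathscr{S})=0$ in $K_0(C^*_r(\sfG))$.

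Next I would pass from $K$-theory to numbers using unimodularity. Because $\sfG$ is unimodular, the trace $\tau_\Omega$ on $\calA_\sfG$ extends to $C^*_r(\sfG)$, and pairing with the now vanishing index class gives $\tau_\Omega({\rm ind}(\slashed{D}_\mathscr{S}))=0$. In degree zero the characteristic map $\chi$ of~(\ref{eq:char}) sends $\Omega$ precisely to $\tau_\Omega$, so $\tau_\Omega({\rm ind}(\slashed{D}_\mathscr{S}))$ is by definition the higher index ${\rm ind}_\Omega(\slashed{D}_\mathscr{S})$. Combining this vanishing with the specialization of Corollary~\ref{cor:dirac} from the first paragraph yields $\int_M\langle\hat{A}^\g(\g),\Omega\rangle=0$, as claimed.

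The main obstacle I expect is the operator-algebra step that converts fiberwise positivity into the vanishing of the $K$-theory index in $K_0(C^*_r(\sfG))$. The delicate point is that fiberwise invertibility of $\slashed{D}_\mathscr{S}$ must be promoted to a \emph{uniform} spectral gap, so that $\slashed{D}_\mathscr{S}$ becomes invertible as an unbounded regular operator affiliated to the $C^*$-algebra; here compactness of $M$ together with continuity of $\kappa$ are essential to guarantee a positive lower bound independent of the fiber. Granting this uniform gap, the index class is represented by an invertible element and hence is zero, and the extension of $\tau_\Omega$ to the $C^*$-completion---which itself relies on unimodularity---makes the final pairing legitimate.
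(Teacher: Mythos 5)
Your argument is correct and coincides with the paper's own: the paper likewise uses the Weitzenb\"ock--Lichnerowicz formula \eqref{eq:wei} to get fiberwise positivity of $\slashed{D}^2_\mathscr{S}$, invokes the standard operator-algebra argument (with $M$ compact) to conclude ${\rm ind}(\slashed{D}_\mathscr{S})=0$ in $K_0(C^*_r(\sfG))$, uses unimodularity to extend $\tau_\Omega$ to $C^*_r(\sfG)$ so that $\tau_\Omega({\rm ind}(\slashed{D}_\mathscr{S}))=0$, and then specializes Corollary~\ref{cor:dirac} exactly as you do (trivial $E$, $\nu=\Omega$, $k=0$). Your added remarks on the uniform spectral gap and the degree-zero identification of $\chi(\Omega)$ with $\tau_\Omega$ only make explicit what the paper leaves implicit.
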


\end{document}